\definecolor{linkblue}{RGB}{1,1,190}
\definecolor{citered}{RGB}{190,1,1}
\theoremstyle{plain}
\newtheorem{theorem}{\bf Theorem}[section]
\newtheorem{proposition}[theorem]{\bf Proposition}
\newtheorem{lemma}[theorem]{\bf Lemma}
\newtheorem{corollary}[theorem]{\bf Corollary}
\theoremstyle{definition}
\newcommand{\N}{\mathbb N}
\newcommand{\Z}{\mathbb Z}
 \DeclareMathOperator{\ord}{ord}
 \DeclareMathOperator{\supp}{supp}
\renewcommand{\time}{\negthinspace \times \negthinspace}
\newcommand{\id}{{\text{\rm id}}}
\renewcommand{\t}{\, | \,}
\newcommand{\red}{{\text{\rm red}}}
\newcommand{\BF}{\text{\rm BF}}
\newcommand{\FF}{\text{\rm FF}}
\newcommand{\AAMP}{\text{\rm AAMP }}
\numberwithin{equation}{section}
\subjclass[2020]{ 20M13, 11B30,  13A05}
\thanks{This work was supported by the Austrian Science Fund FWF (Project P36852-N)}
\begin{document}

\title[On Sets of Lengths in Monoids of plus-minus weighted  Zero-Sum Sequences]{On Sets of Lengths in Monoids of plus-minus weighted \\ Zero-Sum Sequences over Abelian Groups}

\author{Alfred Geroldinger and Florian Kainrath}

\address{University of Graz, NAWI Graz \\
Department of Mathematics and Scientific Computing \\
Heinrichstra{\ss}e 36\\
8010 Graz}

\email{alfred.geroldinger@uni-graz.at, florian.kainrath@uni-graz.at}
\urladdr{https://imsc.uni-graz.at/geroldinger}

\keywords{weighted zero-sum sequences, sets of lengths}

%\date{\today}

\begin{abstract}
Let $G$ be an additive  abelian group.  A sequence $S = g_1 \cdot \ldots \cdot g_{\ell}$ of terms from $G$ is a plus-minus weighted zero-sum sequence if there are $\varepsilon_1, \ldots, \varepsilon_{\ell} \in \{-1, 1\}$ such that $\varepsilon_1 g_1 + \ldots + \varepsilon_{\ell} g_{\ell}=0$. We study sets of lengths in the monoid $\mathcal B_{\pm} (G)$ of plus-minus weighted zero-sum sequences over $G$. If $G$ is finite, then sets of lengths are highly structured. If $G$ is infinite, then every finite, nonempty subset of $\N_{\ge 2}$ is the set of lengths of some sequence $S \in \mathcal B_{\pm} (G)$.
\end{abstract}

\maketitle

\smallskip
\section{Introduction} \label{1}
\smallskip

Let $G$ be an additive abelian group. By a sequence over $G$, we mean a finite sequence of terms from $G$, where the order of terms is disregarded and repetition is allowed. We consider sequences as elements of the (multiplicatively written) free abelian monoid $\mathcal F (G)$ with basis $G$ (the multiplication of sequences in $\mathcal F (G)$ means the concatenation of sequences in combinatorial language). A sequence $S = g_1 \cdot \ldots \cdot   g_{\ell}$, with terms $g_1, \ldots, g_{\ell}$ from $G$, is a zero-sum sequence if $g_1 + \ldots + g_{\ell} = 0$ and it is a plus-minus weighted zero-sum sequence if $\varepsilon_1 g_1 + \ldots + \varepsilon_{\ell} g_{\ell}=0$ for some $\varepsilon_1, \ldots, \varepsilon_{\ell} \in \{-1, 1\}$.

Weighted zero-sum sequences (from fully weighted sequences to plus-minus weighted sequences) have been studied  in additive combinatorics since the last two decades. Among others, many of the classical zero-sum invariants (such as the Davenport constant, Gao's constant, and others) gave rise to weighted analogs (for a sample see \cite[Chapter 16]{Gr13a}, \cite{A-C-F-K-P06,HK14b, Ma-Or-Sa-Sc15,Gr-Ma-Or12a, Gr-He15a, Ma-Or-Ra-Sc16a}).

The monoid $\mathcal B (G)$ of all zero-sum sequences over $G$ is a Krull monoid and it plays a universal role in the arithmetic study of general Krull monoids. Pushed forward by this connection, algebraic and arithmetic properties of $\mathcal B (G)$ are central topics in the factorization theory of rings and monoids. The study of the monoid $\mathcal B_{\pm} (G)$ of plus-minus weighted zero-sum sequences over $G$, from an algebraic viewpoint, was initiated only a couple of years ago by Schmid and his coauthors. Algebraic topics include questions when monoids of weighted zero-sum sequences are Krull, Mori, or finitely generated. Arithmetic topics deal with questions on (various types of) Davenport constants and on  invariants controlling the structure of sets of lengths. The isomorphism problem asks whether, for given abelian groups $G_1$ and $G_2$, the monoids $\mathcal B_{\pm} (G_1)$ and $\mathcal B_{\pm} (G_2)$ are isomorphic if and only if the groups $G_1$ and $G_2$ are isomorphic (for all these topics see \cite{B-M-O-S22, Ge-HK-Zh22, F-G-R-Z24a,Me-Or-Sc25a}).

In the present paper, we study sets of lengths in monoids of plus-minus weighted zero-sum sequences.
To fix notation, let $H$ be a commutative and cancellative monoid. For an element $a \in H$, we denote by $\mathsf L_H (a) \subset \N_0$ the set of lengths of $a$, and by $\mathcal L (H) = \{ \mathsf L (a) \colon a \in H\}$ the system of sets of lengths of $H$ (for details, see Section \ref{2}).

We formulate our main result (more information on  AAMPs and on the set of minimal distances $\Delta^* \big( \mathcal B_{\pm} (G) \big)$ will be given in Section \ref{5}).

\newpage
\smallskip
\begin{theorem} \label{1.1}
Let $G$ be an abelian group.
\begin{enumerate}
\smallskip
\item If $G$ is finite, then there is a bound $M (G) \in \N_0$ such that, for every plus-minus weighted sequence $S\in \mathcal B_{\pm} (G)$, its set of lengths  $\mathsf L_{\mathcal B_{\pm} (G)} (S)$ is an \AAMP with difference $d \in \Delta^* \big( \mathcal B_{\pm} (G) \big)$ and bound $M (G)$.

\smallskip
\item If  $G$ is infinite, then, for every finite, nonempty subset $L \subset \N_{\ge 2}$ and every map $f \colon L \to \N$, there is an $S\in \mathcal B (G)$ with the following properties.
    \begin{enumerate}
    \item $S$ is squarefree in $\mathcal F (G)$.

    \item $\mathsf L_{\mathcal B_{\pm} (G)} (S) = \mathsf L_{\mathcal B (G)} (S) =  L$.

    \item $|\mathsf Z_{\mathcal B_{\pm} (G),k} (S)| \ge |\mathsf Z_{\mathcal B (G),k} (S)| \ge f (k)$ for all $k \in L$. Moreover, both inequalities are equalities for  $k > \min L$.
    \end{enumerate}
\end{enumerate}
\end{theorem}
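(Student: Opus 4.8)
The plan is to reduce the finite case to a general structure theorem. First I would observe that $\mathcal B_{\pm}(G)$ is a reduced, cancellative submonoid of the free abelian monoid $\mathcal F(G)$, so its atoms are precisely the minimal plus-minus weighted zero-sum sequences. For finite $G$ these atoms have bounded length (bounded by the plus-minus Davenport constant, which is finite when $G$ is finite), and since there are only finitely many sequences over a finite group of bounded length, the atom set is finite; hence $\mathcal B_{\pm}(G)$ is finitely generated. I would then invoke the Structure Theorem for Sets of Lengths for finitely generated, reduced, cancellative monoids, which asserts that all sets of lengths are \AAMP with a uniform bound $M$ and with differences drawn from the finite set $\Delta^*$. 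Taking $M(G)$ to be this uniform bound gives the statement.

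\textbf{Part (2).} Here the backbone is Kainrath's realization theorem: for infinite $G$, every finite, nonempty $L \subset \N_{\ge 2}$ is a set of lengths in $\mathcal B(G)$. I would establish a quantitative, squarefree refinement. Working inside a sufficiently large ``independent'' configuration in $G$ --- using that an infinite abelian group contains, for every $N$, a set of $N$ elements all of whose short additive relations are prescribed --- I would encode the target lengths by a system of exchange relations among disjoint blocks of $\supp(S)$, and then inflate the number of distinct length-$k$ factorizations to the prescribed value $f(k)$ by appending several parallel, mutually independent copies of each gadget. Always choosing pairwise distinct group elements keeps $S$ squarefree, and a careful count yields $\mathsf L_{\mathcal B(G)}(S)=L$ together with $|\mathsf Z_{\mathcal B(G),k}(S)| = f(k)$ for $k>\min L$ and $\ge f(\min L)$ at the minimum.

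\textbf{The comparison with the plus-minus monoid.} Since $\mathcal B(G)\subseteq \mathcal B_{\pm}(G)$ but the two monoids have different atoms, the heart of the argument is to control the factorizations of $S$ in $\mathcal B_{\pm}(G)$. Because $S$ is squarefree, every factor occurring in any factorization is squarefree, hence determined by a subset $T\subseteq\supp(S)$ admitting signs $(\varepsilon_g)_{g\in T}$ with $\sum_{g\in T}\varepsilon_g g=0$. The rigidity built into the construction forces every such signed relation to reduce to one of the unsigned relations already designed into $S$; this simultaneously yields $\mathsf L_{\mathcal B_{\pm}(G)}(S)=\mathsf L_{\mathcal B(G)}(S)=L$ and a length-preserving injection of unweighted factorizations into plus-minus factorizations, giving both chains of inequalities. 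The only additional effect permitted by the signs is to combine blocks via sign flips into a single plus-minus atom; one checks that any such genuinely new atom forces the resulting factorization down to minimal length, which is why equality persists for all $k>\min L$ while extra factorizations may arise solely at $k=\min L$.

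\textbf{Main obstacle.} I expect the decisive difficulty to lie in this last comparison, namely in ruling out all \emph{unexpected} plus-minus atoms supported on subsequences of $S$: the configuration must be chosen so rigidly that no accidental signed relation $\sum_{g\in T}\varepsilon_g g=0$ exists beyond those engineered to realize $L$, while at the same time retaining enough flexibility to hit the prescribed factorization counts $f(k)$. Balancing these two opposing demands --- enough relations to produce the lengths and counts, few enough to block spurious signed cancellations --- is the crux of the construction.
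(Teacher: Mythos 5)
Your Part (1) matches the paper's argument: $\mathcal B_{\pm}(G)$ is finitely generated for finite $G$ (the paper cites \cite[Theorem 3.7]{F-G-R-Z24a}; your direct argument via the finiteness of $\mathsf D(\mathcal B_{\pm}(G))$ is fine), and then the Structure Theorem for Sets of Lengths for finitely generated monoids (\cite[Theorem 4.4.11]{Ge-HK06a}) gives the AAMP description with a uniform bound. Nothing to add there.

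For Part (2) there are genuine gaps. First, the construction as you describe it would not work: if you ``inflate the number of distinct length-$k$ factorizations \dots by appending several parallel, mutually independent copies of each gadget,'' then for gadgets with disjoint supports and no cross-relations the length sets \emph{add} and the factorization counts \emph{multiply}, so you obtain sumsets $L_1+\cdots+L_t$ rather than an arbitrary prescribed $L$ with prescribed multiplicities $f(k)$. The construction the paper uses (Kainrath's, reproduced in Section \ref{3}) is the opposite of independent: one takes $X=X_1\times\cdots\times X_s$ with $\{|X_1|,\dots,|X_s|\}=L$, sets $G=R^X/V$, and realizes all $s$ target factorizations as the coordinate partitions $X=\biguplus_{y\in X_i}p_i^{-1}(y)$ of a \emph{single} element $B=B_X$; the entanglement of the coordinates is precisely what prevents extra lengths. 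Second, your appeal to ``a sufficiently large independent configuration in $G$\,'' with prescribed short relations is exactly the point that needs proof and is false as stated for general infinite $G$ (e.g.\ for unbounded torsion groups such as $\bigoplus_p \Z/p\Z$ there is no large subgroup of the form $(\Z/n\Z)^r$). The paper bridges this by a specialization step (Proposition \ref{5.1}): homomorphisms $\Z^r\times\mathsf T(G)\to\Z\times\mathsf T(G)$ and $\to(\Z/n\Z)^r\times\mathsf T(G)$ chosen injective on the finite signed sumset $\Sigma_{\pm}(S)$, combined with a three-case analysis (non-torsion, unbounded torsion, bounded). Third, you correctly identify the crux --- ruling out unexpected plus-minus atoms and showing that any factorization using a genuinely signed atom drops to length $\min L$ --- but you do not carry it out; in the paper this is the content of Lemmas \ref{3.5}--\ref{3.7}, which classify every plus-minus atom dividing $B$ that is not a zero-sum atom as a symmetric difference $B_{\Delta(p_i^{-1}(Y_i),\,p_j^{-1}(Y_j))}$ and deduce that its cofactor is again such an atom, forcing length $2=\min L$. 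So your road map for the comparison step is right, but the realization construction and the reduction to an arbitrary infinite group are missing or would fail as described.
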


\smallskip
Before we discuss the statements in detail, consider a transfer homomorphism $\theta \colon H \to B$ between monoids $H$ and $B$ (definitions are given in Section \ref{2}). The existence of such a homomorphism implies that  $\mathcal L (H) = \mathcal L (B)$. Krull domains, Krull monoids, and transfer Krull monoids (see the examples discussed in the survey \cite{Ge-Zh20a}) allow transfer homomorphisms to monoids of zero-sum sequences over subsets of abelian groups. Thus, sets of lengths in such monoids and domains can be studied in monoids of zero-sum sequences. Similarly, monoids of plus-minus weighted zero-sum sequences occur as target monoids of transfer homomorphisms which start, for example,   from norm monoids of orders in Galois number fields. On the other hand, (apart from a trivial exceptional case) there is no transfer homomorphism from a  monoid of plus-minus weighted zero-sum sequences to any Krull monoid (\cite[Corollary 3.5]{F-G-R-Z24a}). This demonstrates that we cannot get the above result via transfer homomorphisms from an associated result on Krull monoids.

\smallskip
The first statement of Theorem \ref{1.1} is a simple consequence of known results and it is  formulated to highlight the difference between finite and infinite groups (see Section \ref{5} for more on $\Delta^* \big( \mathcal B_{\pm} (G) \big)$). The Characterization Problem (for monoids of plus-minus weighted zero-sum sequences) asks which finite abelian groups $G_1$ have the  property that the equality of systems of sets of lengths $\mathcal L \big( \mathcal B_{\pm} (G_1) \big) = \mathcal L \big( \mathcal B_{\pm} (G_2) \big)$ implies that the groups $G_1$ and $G_2$ are isomorphic, for any finite abelian group $G_2$ (see \cite{F-G-R-Z24a, Me-Or-Sc25a}). All work in this direction is based on the   structural description given in Theorem \ref{1.1}.1.

The second statement of Theorem \ref{1.1} implies, in particular, that every finite, nonempty subset of $\N_{\ge 2}$ occurs as a set of lengths. Thus, the monoid $H = \mathcal B_{\pm} (G)$ satisfies the property
\begin{equation} \label{full}
\mathcal L \big( H \big) = \big\{ \{0\}, \{1\} \big\} \cup \big\{ L \subset \N_{\ge 2} \colon L \ \text{is finite and nonempty} \big\} \,,
\end{equation}
and hence $\mathcal L \big( \mathcal B_{\pm} (G_1) \big) = \mathcal L \big( \mathcal B_{\pm} (G_2) \big)$ for any two infinite abelian groups $G_1$ and $G_2$.

Property \eqref{full} was first proved by Kainrath   for  Krull monoids with infinite abelian class group and prime divisors in all classes (\cite{Ka99a}).
Since then Property \eqref{full} was shown to hold true for various classes of integer-valued polynomials (\cite{Fr13a, Fr-Na-Ri19a, Fa-Fr-Wi23,Fa-Wi24a}), for some classes of primary monoids (\cite[Theorem 3.6]{Go19a}), and others (\cite{Fa-Zh23a}). Furthermore, there are monoid algebras which satisfy the ascending chain condition on principal ideals (whence they are atomic) and which have the property that every (not necessarily finite) nonempty subset of $\N_{\ge 2}$ occurs as a set of lengths (\cite{Ge-Go25a}).

After the first result in \cite{Ka99a},   several distinct constructions realizing sets of lengths in Krull monoids were given (for a realization result in numerical monoids see \cite{Ge-Sc18e}).
\begin{itemize}
\item[(i)] A realization theorem for a single finite, nonempty subset $L \subset \N_{\ge 2}$ in an abstract finitely generated Krull monoid $H_L$ (\cite[Proposition 4.8.3]{Ge-HK06a}; note if $\mathcal L$ is a family of finite subsets, then all sets of $\mathcal L$ are sets of lengths in the coproduct $\coprod_{L \in \mathcal L} H_L$, and this coproduct is Krull again).

\item[(ii)] A realization theorem for a single finite, nonempty subset $L \subset \N_{\ge 2}$ in a monoid of zero-sum sequences over a finite abelian group (\cite{Sc09a}).

\item[(iii)] A realization theorem for all finite, nonempty subsets $L \subset \N_{\ge 2}$ in a monoid of zero-sum sequences over an abelian group containing an element of infinite order (\cite[Theorem 3]{B-R-S-S16}).
\end{itemize}

Our proof of Theorem \ref{1.1} follows  the  ideas given in \cite{Ka99a}. We recapitulate the whole construction, which  allows us to prove not only Property \eqref{full} for $\mathcal B_{\pm} (G)$, but the stronger Properties (a), (b), and (c) of Theorem \ref{1.1}. Moreover, this increases the readability of the present paper.

We obtain the following corollary, which   should be seen against the background of the above mentioned Characterization Problem (for related results see \cite[Theorem 4]{Wi24a}, \cite[Theorem 3.7]{Ge-Sc-Zh17b}).

\smallskip
\begin{corollary} \label{1.2}
Let $L \subset \N_{\ge 2}$ be a finite, nonempty subset. Then there are only finitely many pairwise non-isomorphic finite abelian groups $G$ such that $L \notin \mathcal L \big( \mathcal B_{\pm} (G) \big)$.
\end{corollary}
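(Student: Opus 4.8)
The plan is to produce, for the fixed set $L$, two explicit \emph{sufficient} conditions on a finite abelian group $G$ guaranteeing $L \in \mathcal L\big(\mathcal B_{\pm}(G)\big)$, and then to check that only finitely many finite abelian groups satisfy neither. Throughout I will use the elementary observation that if $H$ is a subgroup of $G$, then $\mathcal B_{\pm}(H)$ is a divisor-closed submonoid of $\mathcal B_{\pm}(G)$: every subsequence of a sequence over $H$ has all its terms in $H$, and for a sequence with terms in $H$ the plus-minus weighted zero-sum condition over $G$ coincides with that over $H$. Hence the factorizations of an element of $\mathcal B_{\pm}(H)$ agree whether computed in $\mathcal B_{\pm}(H)$ or in $\mathcal B_{\pm}(G)$, so that $\mathcal L\big(\mathcal B_{\pm}(H)\big) \subseteq \mathcal L\big(\mathcal B_{\pm}(G)\big)$; in particular, it suffices to realize $L$ over some finite subgroup of $G$.

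First I will treat groups of large exponent. Applying Theorem \ref{1.1}.2 to the infinite group $\Z$ gives a squarefree sequence $S = a_1 \cdot \ldots \cdot a_{\ell} \in \mathcal B(\Z)$ with $\mathsf L_{\mathcal B_{\pm}(\Z)}(S) = L$; set $N := \sum_{i=1}^{\ell} |a_i|$. The key step is a reduction lemma: for the reduction homomorphism $\pi \colon \Z \to \Z/m\Z$ with $m > N$, a subsequence $T \t S$ is a plus-minus weighted zero-sum sequence over $\Z/m\Z$ if and only if it is one over $\Z$. Indeed, every plus-minus weighted sum $\sum \varepsilon_i a_i$ formed from terms of $S$ has absolute value at most $N < m$, so it is divisible by $m$ precisely when it vanishes in $\Z$. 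The same bound makes $\pi$ injective on $\supp(S)$, so atoms and hence factorizations of $S$ correspond bijectively and length-preservingly to those of $\pi(S)$; therefore $\mathsf L_{\mathcal B_{\pm}(\Z/m\Z)}(\pi(S)) = L$, whence $L \in \mathcal L\big(\mathcal B_{\pm}(\Z/m\Z)\big)$ for every $m > N$. If now $\exp(G) > N$, then $G$ has an element of order $\exp(G)$ and thus a subgroup isomorphic to $\Z/\exp(G)\Z$, and the divisor-closed principle yields $L \in \mathcal L\big(\mathcal B_{\pm}(G)\big)$.

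Second I will treat groups of large rank. For each prime $p$, applying Theorem \ref{1.1}.2 to the infinite elementary $p$-group $\bigoplus_{i \in \N} \Z/p\Z$ yields a sequence $S_p$ realizing $L$ whose support, being finite, lies in a subgroup isomorphic to $(\Z/p\Z)^{k(p)}$ for some $k(p) \in \N$. By the divisor-closed principle $L \in \mathcal L\big(\mathcal B_{\pm}((\Z/p\Z)^{k(p)})\big)$, and hence $L \in \mathcal L\big(\mathcal B_{\pm}(G)\big)$ whenever the $p$-rank $r_p(G) := \dim_{\F_p}\{g \in G \colon pg = 0\}$ satisfies $r_p(G) \ge k(p)$. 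It remains to count the exceptions: if $L \notin \mathcal L\big(\mathcal B_{\pm}(G)\big)$, then $G$ fails both conditions, i.e.\ $\exp(G) \le N$ and $r_p(G) < k(p)$ for every prime $p$. The bound $\exp(G) \le N$ restricts the prime divisors of $|G|$ to the finitely many primes $\le N$, and for each such prime the $p$-component of $G$ has exponent $\le N$ and rank $< k(p)$, hence bounded order, so it ranges over finitely many isomorphism types. As $G$ is the direct sum of its $p$-components, only finitely many groups $G$ arise, which is the assertion.

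The one genuinely technical point is the length-preservation under reduction modulo $m$; everything else is bookkeeping with the structure theorem for finite abelian groups. Conceptually, the essential feature is that no single infinite group suffices: large cyclic groups (bounded rank, unbounded exponent) are reached only through the reduction from $\Z$, whereas bounded-exponent groups of large rank are reached only through the finite-support realizations over the infinite elementary $p$-groups, and combining the two sufficient conditions is exactly what leaves a bounded-exponent, bounded-rank — therefore finite — family of exceptions.
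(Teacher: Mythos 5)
Your proposal is correct and follows essentially the same route as the paper: realize $L$ over $\Z$ and transfer it to $\Z/m\Z$ for all $m$ exceeding the sum of absolute values of the terms (the paper's Claim {\bf C1}), realize $L$ over an infinite elementary $p$-group and restrict to the finite subgroup generated by the support (the paper's Claim {\bf C2}), and then conclude via bounded exponent and bounded $p$-ranks. The only cosmetic difference is that you make the divisor-closed-submonoid principle and the injectivity of the reduction map on the support explicit, which the paper leaves implicit.
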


\smallskip
In Section \ref{2}, we put together the background on factorizations and on monoids of (weighted) zero-sum sequences.
A crucial step in the proof of Theorem \ref{1.1} will be done in Section \ref{3} and a further reduction step will be handled in Section \ref{4}. Finally,  we complete the proofs of Theorem \ref{1.1} and of  Corollary \ref{1.2} in Section \ref{5}. Along our way, we oftentimes  (as in Theorem \ref{1.1}) consider sequences $S \in \mathcal B (G)$ and find out that the set of lengths or even all factorizations of $S$ in $\mathcal B (G)$ are the same as  in $\mathcal B_{\pm} (G)$. Needless to say, that this is far from being true in general (for a striking difference see Proposition \ref{5.2}) but holds true only in well-constructed exceptional cases.

\smallskip
\section{Prerequisites} \label{2}
\smallskip

By a {\it monoid}, we mean a commutative, cancellative semigroup with identity element, and we use multiplicative notation throughout. For integers $a, b \in \Z$, let $[a,b] = \{ x \in \Z \colon a \le x \le b \}$ be the discrete interval between $a$ and $b$. For a set $P$, we denote by $\mathcal F (P)$ the free abelian monoid with basis $P$. An element $a \in \mathcal F (P)$ has a unique representation in the form
\[
a = \prod_{p \in P}p^{\mathsf v_p (a)} \,,
\]
where $\mathsf v_p \colon \mathcal F (P) \to \N_0$ is the $p$-adic valuation of $a$. Then $|a| = \sum_{p \in P} \mathsf v_p (a) \in \N_0$ denotes the {\it length} of $a$ and $\supp (a) = \{ p \in P \colon \mathsf v_p (a) > 0 \} \subset P$ the {\it support} of $a$.

Let $H$ be a monoid. We denote by $H^{\times}$ its group of invertible element and by $H_{\red} = \{ aH^{\times} \colon a \in H\}$ the associated reduced monoid of $H$. An element $a \in H$ is said to be {\it irreducible} (or an {\it atom}) if $a \notin H^{\times}$ and $a = bc$, with $b , c \in H$,  implies that $b \in H^{\times}$ or $c \in H^{\times}$. We denote by $\mathcal A (H)$ the set of atoms of $H$, by $\mathsf Z (H) = \mathcal F \big( \mathcal A (H_{\red}) \big)$  the {\it factorization monoid} of $H$,  and by $\pi \colon \mathsf Z (H) \to H_{\red}$ the {\it factorization homomorphism}, defined by $\pi (u) = u$ for all $u \in \mathcal A (H_{\red})$. For an element $a \in H$ and $k \in \N_0$, let
\begin{itemize}
\item[(i)] $\mathsf Z_H (a) = \mathsf Z (a) = \pi^{-1} (a) \subset \mathsf Z (H)$ be the {\it set of factorizations} of $a$,

\item[(ii)] $\mathsf Z_{H,k} (a) = \mathsf Z_k (a) = \{ z \in \mathsf Z (a) \colon |z|=k\}$ be the set of factorizations of $a$, which have length $k$, and

\item[(iii)] $\mathsf L_H (a) = \mathsf L (a) = \{|z| \colon z \in \mathsf Z (a) \} \subset \N_0$ be the {\it set of lengths} of $a$.
\end{itemize}
Thus, by definition, $\mathsf L (a) = \{1\}$ if and only if $a$ is an atom and $\mathsf L (a) = \{0\}$ if and only if $a \in H^{\times}$. We say that $H$ is
\begin{itemize}
\item[(i)] {\it atomic} if $\mathsf Z (a) \ne \emptyset$ for all $a \in H$ (equivalently, every $a \in H \setminus H^{\times}$ can be written as a finite product of atoms),

\item[(ii)] an \FF-{\it monoid} (finite-factorization monoid) if $\mathsf Z (a)$ is finite and nonempty for all $a \in H$, and

\item[(iii)] a \BF-{\it monoid} (bounded-factorization monoid) if $\mathsf L (a)$ is finite and nonempty for all $a \in H$.
\end{itemize}
Let $H$ be a \BF-monoid. We denote by
\[
\mathcal L (H) = \{\mathsf L (a) \colon a \in H \}
\]
the {\it system of sets of lengths} of $H$.
A submonoid $S \subset H$ is called {\it divisor-closed} if $a \in H$ and $b \in S$ with $a \t b$ implies that $a \in S$. If $a \in S$, then
\[
\mathsf Z_S (a) = \mathsf Z_H (a) \quad \text{and} \quad \mathsf L_S (a) = \mathsf L_H (a)  \,.
\]

\smallskip
Let $G$ be an additive abelian group and let $G_0 \subset G$ be a subset. For every $m \in \N$, we set $mG = \{mg \colon g \in G \}$. The elements of $\mathcal F (G_0)$ are called {\it sequences} over $G_0$. Let $S\in \mathcal F (G_0)$. Then we write
\[
S = g_1 \cdot \ldots \cdot g_{\ell} = \prod_{g \in G_0} g^{\mathsf v_g (S)} \,,
\]
and in this notation we tacitly assume that $\ell \in \N_0$ and $g_1, \ldots , g_{\ell} \in G_0$. If $\varphi \colon G \to G'$ is a group homomorphism, then $\varphi (S) = \varphi (g_1) \cdot \ldots \cdot \varphi (g_{\ell})$,  $-S = (-g_1) \cdot \ldots \cdot (-g_{\ell})$,
\[
|S|=\ell = \sum_{g \in G_0} \mathsf v_g (S) \in \N_0
\]
is the {\it length} of $S$, and
\[
\sigma (S) = g_1 + \ldots + g_{\ell} \in G
\]
is the {\it sum} of $S$. We say that $S$ is
\begin{itemize}
\item[(i)] {\it squarefree} (in $\mathcal F (G_0)$) if $g^2 \nmid S$ for all $g \in G_0$,

\item[(ii)] a {\it zero-sum sequence} if $\sigma (S) = 0$, and

\item[(iii)] a {\it plus-minus weighted zero-sum sequence} if there are $\varepsilon_1, \ldots, \varepsilon_{\ell} \in \{-1, 1\}$ such that \\ {$\varepsilon_1 g_1 + \ldots + \varepsilon_{\ell} g_{\ell} = 0$.}
\end{itemize}
We denote by $\mathcal B (G_0)$ the set of zero-sum sequences over $G_0$ and by $\mathcal B_{\pm} (G_0)$ the set of plus-minus weighted zero-sum sequences over $G_0$. These are submonoids of $\mathcal F (G_0)$ with the obvious inclusions
\[
\mathcal B (G_0) \subset \mathcal B_{\pm} (G_0) \subset \mathcal F (G_0) \,.
\]
Whenever we speak of a squarefree sequence, we mean that the sequence is squarefree in $\mathcal F (G)$. If $G_1 \subset G_0$ is a subset, then $\mathcal B (G_1) \subset \mathcal B (G_0)$ and $\mathcal B_{\pm} (G_1) \subset \mathcal B_{\pm} (G_0)$ are divisor-closed submonoids.
If every non-zero element $g \in G_0$ has order two, then $\mathcal B (G_0) = \mathcal B_{\pm} (G_0)$. Both monoids, $\mathcal B (G_0)$ and $\mathcal B_{\pm} (G_0)$, are \FF-monoids and hence \BF-monoids. Since the inclusion $\mathcal B (G_0) \hookrightarrow \mathcal F (G_0)$ is a divisor homomorphism, $\mathcal B (G_0)$ is  a Krull monoid.

A monoid homomorphism $\theta \colon H \to B$ is said to be a {\it transfer homomorphism} if the following two conditions are satisfied.
\begin{enumerate}
\item[{\bf (T\,1)\,}] $B = \theta(H) B^\times$  and  $\theta^{-1} (B^\times) = H^\times$.

\item[{\bf (T\,2)\,}] If $u \in H$, \ $b,\,c \in B$  and  $\theta (u) = bc$, then there exist \ $v,\,w \in H$ such that  $u = vw$, \  $\theta (v) \in bB^{\times}$, and  $\theta (w) \in c B^{\times}$.
\end{enumerate}
A transfer homomorphism  allows to pull back various arithmetic properties from $B$ to $H$. In particular, we have $\mathcal L (H) = \mathcal L (B)$. The classic example of a transfer homomorphism stems from the theory of  Krull monoids. They allow a transfer homomorphism to  monoids of zero-sum sequences $\mathcal B (G_0)$, where $G_0$ is a subset of the class group of the Krull monoid (\cite{Ge-HK06a}). A monoid is said to be {\it transfer Krull} if it allows a transfer homomorphisms to a Krull monoid (equivalently, to a monoid of zero-sums sequences) (see \cite{Ge-Zh20a, Ba-Re22a} for examples of transfer Krull monoids that are not Krull). If $G$ is an abelian group, then for the monoid of plus-minus weighted zero-sum sequences, we have the following characterizations (\cite[Corollary 3.5]{F-G-R-Z24a}). There are equivalent.
\begin{itemize}
\item[(a)]  $\mathcal B_{\pm} (G)$ is Krull.

\item[(b)] $\mathcal B_{\pm} (G)$ is transfer Krull.

\item[(c)]  $G$ is an elementary $2$-group.
\end{itemize}
On the other hand, in \cite[Theorems 3.2 and 3.5]{Ge-HK-Zh22}, it was proved that  norm monoids of Galois number fields and monoids of positive integers, which can be represented by certain binary quadratic forms, allow transfer homomorphisms to monoids of plus-minus weighted zero-sum sequences. More on this direction can be found in \cite{HK14b, B-M-O-S22, Co-Ha25a}.

We end this section with a simple lemma, dealing with the relationship of atoms in $\mathcal B (G)$ and in $\mathcal B_{\pm} (G)$. Clearly, we have
\begin{equation} \label{atom-inclusion}
\mathcal A \big( \mathcal B_{\pm} (G) \big) \cap \mathcal B (G) \subset \mathcal A \big( \mathcal B (G) \big) \,.
\end{equation}
The next lemma guarantees the reverse implication for groups with trivial $2$-torsion (for finite groups see \cite[Theorem 6.1]{B-M-O-S22}).

\smallskip
\begin{lemma} \label{2.1}
Let $G$ be an abelian group and suppose that there is no $g \in G$ with $\ord (g)=2$. Then $\mathcal A \big( \mathcal B (G) \big) = \mathcal A \big( \mathcal B_{\pm} (G) \big) \cap \mathcal B (G)$. In particular, we have $\mathsf L_{\mathcal B (G)} (B)  \subset \mathsf L_{\mathcal B_{\pm} (G)} (B)$ for all $B \in \mathcal B (G)$.
\end{lemma}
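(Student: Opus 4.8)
The inclusion $\mathcal A \big( \mathcal B_{\pm} (G) \big) \cap \mathcal B (G) \subseteq \mathcal A \big( \mathcal B (G) \big)$ is exactly \eqref{atom-inclusion} and needs no hypothesis on $G$, so the plan is to prove the reverse inclusion: every atom of $\mathcal B (G)$ stays irreducible in $\mathcal B_{\pm} (G)$ (it is trivially still a non-unit there and lies in $\mathcal B (G)$). I argue by contradiction. Let $U \in \mathcal A \big( \mathcal B (G) \big)$ and suppose that $U = VW$ in $\mathcal B_{\pm} (G)$ with both $V$ and $W$ nontrivial. Since all monoids in sight are reduced, it suffices to exhibit a proper, nonempty zero-sum subsequence $T \t U$: its complement $U T^{-1}$ is then automatically a zero-sum sequence, and $U = T \cdot (U T^{-1})$ is a factorization of $U$ into two nontrivial elements of $\mathcal B (G)$, contradicting irreducibility of $U$ there.

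The key device is to compute the sums of the two factors. Fixing a sign vector witnessing that $V$ is plus-minus weighted zero-sum, I split $V = V_+ V_-$, where $V_+$ collects the terms assigned $+1$ and $V_-$ those assigned $-1$; then $\sigma (V_+) = \sigma (V_-) =: x$, and hence $\sigma (V) = 2x$. Writing $W = W_+ W_-$ in the same way gives $\sigma (W) = 2y$ with $y = \sigma (W_+) = \sigma (W_-)$. Because $U$ is a zero-sum sequence, $0 = \sigma (U) = 2x + 2y = 2 (x+y)$, and this is precisely where the hypothesis enters: as $G$ has no element of order $2$, we may cancel the $2$ and obtain $x + y = 0$. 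Therefore the crossed subsequence $V_+ W_-$ has sum $\sigma (V_+) + \sigma (W_-) = x + y = 0$ and is a zero-sum subsequence of $U$.

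The main point requiring care — and the only genuine obstacle — is to ensure that such a crossed subsequence can be chosen proper and nonempty, which means dispatching the degenerate configurations. If one of $V_+, V_-$ is empty, then $x = 0$, so $V$ itself is a nontrivial zero-sum sequence which is a proper subsequence of $U$ (as $W$ is nontrivial), and we are done; the case where $W_+$ or $W_-$ is empty is symmetric. Otherwise all four of $V_+, V_-, W_+, W_-$ are nonempty, and then $V_+ W_-$ is nonempty while its complement $V_- W_+$ is nonempty as well, so $T := V_+ W_-$ is the desired proper, nonempty zero-sum subsequence. This proves $\mathcal A \big( \mathcal B (G) \big) = \mathcal A \big( \mathcal B_{\pm} (G) \big) \cap \mathcal B (G)$. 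The ``in particular'' statement follows at once: any factorization of $B \in \mathcal B (G)$ into atoms of $\mathcal B (G)$ is, by the equality just shown, also a factorization into atoms of $\mathcal B_{\pm} (G)$ of the same length, whence $\mathsf L_{\mathcal B (G)} (B) \subseteq \mathsf L_{\mathcal B_{\pm} (G)} (B)$.
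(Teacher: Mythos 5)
Your proposal is correct and follows essentially the same route as the paper: split each plus-minus factor into its $+$ and $-$ parts, observe that $\sigma$ of the whole sequence equals twice a sum of partial sums, cancel the $2$ using the absence of $2$-torsion, and extract a nonempty zero-sum subsequence that contradicts (or, in the paper's phrasing, forces equality with) the atom. The only cosmetic differences are that the paper combines the two ``plus'' parts rather than a crossed pair and concludes directly that one factor equals the atom instead of arguing by contradiction.
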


\begin{proof}
Clearly, it suffice to show that statement on the atoms. Then the 'in particular' on sets of lengths follows immediately. Let $A \in \mathcal A \big( \mathcal B (G) \big)$ and suppose that there are $B_1, B_2 \in \mathcal B_{\pm} (G)$ with $A = B_1B_2$ and with $B_1 \ne 1$. We assert that $B_1=A$.

Since $B_1, B_2 \in \mathcal B_{\pm} (G)$, there are $B_1^+, B_1^-, B_2^+, B_2^- \in \mathcal F (G)$ such that
\[
B_1 = B_1^+B_1^-, \ B_2 = B_2^+B_2^-, \ \sigma (B_1^+)=\sigma (B_1^-), \ \text{and} \ \sigma (B_2^+) = \sigma (B_2^-) \,.
\]
By symmetry, we may suppose without restriction that $B_1^+ \ne 1$. Then it follows that
\[
0 = \sigma (A) = \sigma (B_1^+)+\sigma (B_1^-)+\sigma (B_2^+)+\sigma (B_2^-) = 2 \big(  \sigma (B_1^+)+ \sigma (B_2^+) \big) \,.
\]
Since, by assumption, $G$ has no elements of order two, we infer that $\sigma (B_1^+)+ \sigma (B_2^+)=0$, whence $1 \ne B_1^+B_2^+ \in \mathcal B (G)$. Since $A \in \mathcal A \big( \mathcal B (G) \big)$, it follows that $A = B_1^+B_2^+$, whence $B_1^-=B_2^-=1$ and so $B_1=A$.

This implies that $\mathcal A \big( \mathcal B (G) \big) \subset \mathcal A \big( \mathcal B_{\pm} (G) \big)$, and hence the asserted equality holds by Relation \eqref{atom-inclusion}.
\end{proof}

\smallskip
\section{On the construction of a group having a given $L$ with $\min L = 2$ as a set of lengths} \label{3}
\smallskip

The goal in this section is to prove the following proposition.

\smallskip
\begin{proposition} \label{3.1}
Let $R$ be a commutative ring, let $L \subset \N_{\ge 2}$ be a finite
 subset with $2 \in L$,  and let $f \colon L \to \N$  be a map with $\sum_{k \in L} f (k) \ge 3$. Then there exist
a finitely generated free $R$-module $G$ and a sequence $B \in \mathcal B (G)$ with the following properties.
\begin{enumerate}
\item[(a)] $B$ is squarefree in $\mathcal F (G)$.

\item[(b)] $\mathsf L_{\mathcal B_{\pm} (G)} (B) = \mathsf L_{\mathcal B (G)} (B) = L$.

\item[(c)] $|\mathsf Z_{\mathcal B_{\pm} (G),k} (B)| \ge |\mathsf Z_{\mathcal B (G),k} (B)| \ge f (k)$ for all $k \in L$. Moreover, both inequalities are equalities for  $k > \min L$.
\end{enumerate}
\end{proposition}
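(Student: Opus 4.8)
The plan is to build $G$ and $B$ by hand, to determine \emph{all} factorizations of $B$ in $\mathcal B(G)$ by a single combinatorial bookkeeping, and only then to read off the statements about $\mathcal B_{\pm}(G)$. Since we are free to choose $G$, I would take $G$ to be free of large enough rank over $R$ and would keep all sums occurring in the construction inside a part of $G$ on which $2x=0$ forces $x=0$; in the main case (for instance $R=\Z$, which covers the application to groups with an element of infinite order) $G$ has no element of order $2$ at all, and the analysis of $\mathcal B_{\pm}(G)$ then rests entirely on the argument already used in Lemma~\ref{2.1}. The instructive base case is $L=\{2,m\}$: taking terms in mirrored pairs $B=\prod_{i=1}^{m} v_i\cdot(-v_i)$ with $v_1,\dots,v_{m-1}$ part of a basis and $v_m=-(v_1+\dots+v_{m-1})$, a short coordinate computation shows that the only zero-sum subsequences are the pairs $v_i\cdot(-v_i)$ and the two ``halves'' $v_1\cdots v_m$ and $(-v_1)\cdots(-v_m)$, so that $B$ has exactly two factorizations, of lengths $m$ (all pairs) and $2$ (the two halves), and is squarefree. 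This is the prototype to be generalized.

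For a general finite $L$ with $\min L=2$ and prescribed multiplicities, the pure pairs-plus-halves mechanism is not enough: a length-$2$ factorization needs the positive terms to form a single \emph{minimal} block, which precludes the partial splittings that would give intermediate lengths. I would therefore overlay, on top of the mirrored pairs, a designed finite list of sign-crossing atoms (genuine zero-sum subsequences using both positive and negative terms), chosen as prescribed $R$-combinations of basis vectors. The point of working over a free module is genericity: I want the only zero-sum subsequences of $B$ to be the pairs together with the explicitly installed blocks and the unions and complements they force, so that every factorization of $B$ in $\mathcal B(G)$ is encoded by an admissible configuration of these blocks, with a length computable from the configuration. The designed list is then tuned so that the admissible configurations have lengths exactly $L$ and occur with the prescribed numbers: for each $k\in L$ with $k>2$ one installs exactly $f(k)$ mutually incompatible configurations of length $k$, while the length-$2$ configuration realizes $\min L$ and the all-pairs configuration realizes $\max L$. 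The hypothesis $\sum_{k}f(k)\ge 3$ is what guarantees enough terms to run this bookkeeping uniformly, the few degenerate cases being deferred to the later reduction.

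Granting the description of $\mathcal B(G)$-factorizations, the passage to $\mathcal B_{\pm}(G)$ is as follows. Since $B\in\mathcal B(G)$ and (in the main case) $G$ has no element of order $2$, Lemma~\ref{2.1} gives $\mathsf L_{\mathcal B(G)}(B)\subset \mathsf L_{\mathcal B_{\pm}(G)}(B)$ and, because every $\mathcal B(G)$-atom dividing $B$ is then a $\mathcal B_{\pm}(G)$-atom, the inequalities $|\mathsf Z_{\mathcal B_{\pm}(G),k}(B)|\ge|\mathsf Z_{\mathcal B(G),k}(B)|\ge f(k)$ of (c) for free. For the reverse inclusion and the equalities for $k>2$, I would take any factorization $B=A_1\cdots A_r$ into $\mathcal B_{\pm}(G)$-atoms, split each $A_i=A_i^{+}A_i^{-}$ along a sign witness, and rerun the computation from the proof of Lemma~\ref{2.1}: with $B^{\pm}=\prod_i A_i^{\pm}$ one has $\sigma(B^{+})=\sigma(B^{-})$ and $\sigma(B^{+})+\sigma(B^{-})=\sigma(B)=0$, whence $2\sigma(B^{+})=0$ and so $B^{+},B^{-}\in\mathcal B(G)$. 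Thus every $\mathcal B_{\pm}(G)$-factorization refines a genuine splitting of $B$ into zero-sum parts, and the explicit list of zero-sum subsequences obtained above forces any non-honest $\mathcal B_{\pm}$-atom to be so large that it can occur only in a factorization of length $2$. This yields $\mathsf L_{\mathcal B_{\pm}(G)}(B)=L$ and equality of the factorization counts for all $k>2$, with only the length-$2$ count permitted to grow, exactly as in (b) and (c). In the presence of $2$-torsion (other choices of $R$) the same conclusion is instead read off directly from the explicit signed zero-sum subsequences of the constructed $B$.

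The main obstacle is the upper bound inside $\mathcal B(G)$: proving that the designed relations are the \emph{only} (signed) zero-sum relations among the chosen terms, so that no unforeseen atom creates an extra length or an extra factorization. This is precisely where freeness of $G$ over $R$ is spent: each candidate relation becomes a linear system in the basis coordinates, and one must choose the coordinates of the $v_i$ so that no $\{-1,0,1\}$-weighted combination outside the prescribed list vanishes, while simultaneously keeping $B$ squarefree and keeping the installed configurations pairwise incompatible. The delicate combinatorial core is to control the complements and overlaps of the designed blocks---so that collapsing one configuration never accidentally enables another and so that the forced unions and complements contribute only lengths already lying in $L$ with the intended multiplicities; this is the heart of the argument, and it is what the remaining reductions in the later sections are set up to exploit.
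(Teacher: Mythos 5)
There is a genuine gap: your text is a strategy outline rather than a proof, and the part you defer is precisely the mathematical content of the proposition. For general $L$ with $\min L=2$ and $s=\sum_k f(k)\ge 3$ you never actually define the module $G$, the terms of $B$, or the ``designed finite list of sign-crossing atoms''; you only assert that a generic choice over a free module can be ``tuned'' so that the admissible configurations have exactly the lengths in $L$ with the prescribed multiplicities. Your worked prototype (mirrored pairs $v_i\cdot(-v_i)$ with $v_m=-(v_1+\dots+v_{m-1})$) only realizes $L=\{2,m\}$ with $s=2$, which is outside the hypotheses of Proposition~\ref{3.1} and is in fact one of the small cases the paper handles separately later (CASE 4 of Proposition~\ref{4.1}). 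The paper's construction is quite different and fully explicit: one takes $X=\prod_{j=1}^s X_j$ with $|X_j|$ running through $L$ with multiplicities $f(k)$, sets $G=R^X/V$ with $V$ spanned by the characteristic functions of the coordinate slices $p_i^{-1}(y)$, and $B=\prod_{x\in X}(\chi_x+V)$; the $s$ factorizations then correspond to the $s$ ways of slicing $X$ along a coordinate, and all of (a)--(c) is verified by classifying exactly which $B_M$ are (weighted) zero-sum divisors of $B$. Also note that your design produces only one length-$2$ factorization (``the two halves''), whereas (c) requires at least $f(2)$ of them; in the paper these come for free from the $f(2)$ coordinates $X_i$ with $|X_i|=2$.

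The second gap is in the passage to $\mathcal B_{\pm}(G)$. The Lemma~\ref{2.1}-style computation you invoke does show (absent $2$-torsion) that a $\mathcal B_{\pm}$-factorization $B=A_1\cdots A_r$ induces a splitting of $B$ into two genuine zero-sum parts, but the key claim --- that any atom of $\mathcal B_{\pm}(G)$ dividing $B$ and not lying in $\mathcal B(G)$ is so large that it forces $r=2$, and that atoms of $\mathcal B(G)$ dividing $B$ remain atoms in $\mathcal B_{\pm}(G)$ --- is exactly what the paper proves in Lemmas~\ref{3.5}--\ref{3.7} by showing that such divisors are precisely the $B_M$ with $M=\Delta\bigl(p_i^{-1}(Y_i),p_j^{-1}(Y_j)\bigr)$ and that each of these is an atom whose complement is again of the same form. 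Without an explicit construction there is no way to carry out this classification, so the equalities in (b) and (c) for $k>\min L$, and even the inclusion $\mathsf L_{\mathcal B_{\pm}(G)}(B)\subset L$, remain unproved. The characteristic-$2$ case is likewise only asserted. In short, the approach is not unreasonable in spirit, but the heart of the argument --- the construction and the exhaustive control of all signed zero-sum divisors --- is missing.
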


\smallskip
We proceed in three subsections. First, we put together the setting in which $G$ and $B$ are defined. Then we study the atoms in $\mathcal B (G)$ and in $\mathcal B_{\pm} (G)$. In the third subsection, we  give the proof of Proposition \ref{3.1}

\smallskip
\subsection{On the construction of the $R$-module $G$ and the wanted sequence $B \in \mathcal B (G)$}~

As said in the introduction, the $R$-module $G$ and the sequence $B$ are the same as in \cite{Ka99a}. We carefully provide all definitions and gather all the required properties. The proofs of these properties can be found in \cite{Ka99a} and in \cite[Chapter 7.4]{Ge-HK06a}.

Let $R$ be a commutative ring, let $L \subset \N_{\ge 2}$ be a finite
 subset with $2 \in L$,  and let $f \colon L \to \N$  be a map with $s := \sum_{k \in L} f (k) \ge 3$.
We  consider a tuple of finite sets $(X_1, \ldots, X_s)$  such that
\begin{equation} \label{setting-1}
L = \{|X_1|, \ldots, |X_s|\} \quad \text{and} \quad f(k) = \bigl|
\bigl\{ i \in [1,s] \colon |X_i| = k \bigr\} \bigr|
\quad\text{for every}\quad k \in L \,.
\end{equation}
We set
\[
X = \prod^s_{j=1} X_j \quad\text{and}\quad X_J = \prod_{j \in J}X_j \quad\text{for}\quad
\emptyset \ne J \subset [1,s] \,.
\]
For $\emptyset \ne J \subset [1,s]$, let $p_J \colon X \to X_J$ be the canonical projection and, for $j \in [1,s]$, let $p_j = p_{\{j\}}\colon X \to X_j$. For $z = (z_1, \ldots, z_s) \in X$, $i \in [1,s]$, and  a nonempty subset $J \subset [1,s]$, we define
\[
X^{(z)}_i = X_i \setminus \{z_i\} \quad\text{and}\quad X^{(z)}_J = \prod_{j\in J} X^{(z)}_j \,.
\]
Furthermore, for two subsets $M, N \subset X$, let
\[
\Delta (M,N) = (M \setminus N) \cup (N \setminus M)
\]
denote the symmetric difference of $M$ and $N$. For two elements $x = (x_1, \ldots, x_s), \,y = (y_1, \ldots, y_s) \in X$,
we set  $\Delta (x,y) = \{i \in [1,s] \colon x_i \ne y_i\}$.

Now let $R^X$ be the $R$-algebra of all maps $X \to R$, with pointwise addition, pointwise multiplication, and with scalar multiplication by elements from $R$. For a subset $M \subset X$, let $\chi_M \in R^X$ be the characteristic function of $M$, defined by
\[
\chi_M (x) =
\begin{cases}
1\,, & \text{ if } \ x \in M \,,\\
0\,, & \text{ if } \ x \notin M \,.
\end{cases}
\]
For an element $x \in X$,  we set $\chi_x = \chi_{\{x\}} \in R^X$, and $\boldsymbol 1 = \chi_X$ is the constant function with value $1 \in R$.

Next, we define the wanted finitely generated free $R$-module. We set
\begin{equation} \label{setting-2}
V = \bigl \langle \{ \chi_{p^{-1}_i (y)} \colon i \in [1,s]\,, \ y \in X_i \} \bigr\rangle_R \,, \quad G = R^X \negthinspace/V\,,
\end{equation}
and, for every $z \in X$,
\[
W_z = \bigl \langle \bigl\{ \chi_{p^{-1}_J (y)} \colon J
\subset [1,s]\,, \ |J| \ge 2\,, \ y \in X^{(z)}_J \bigr\}
\bigr\rangle_R \,.
\]
We have that  $V = \bigl\langle \{\boldsymbol {1}\} \cup \{ \chi_{p^{-1}_i
(y)} \colon i \in [1,s]\,, \ y \in X^{(z)}_i \}\bigr \rangle_R$,
$R^X = V \oplus W_z$, and so $G$ is a finitely generated free $R$-module. We denote by $P_z \colon R^X = V \oplus W_z \to V$ the associated projection. If $M \subset X \setminus \{z\}$, then 
\begin{equation} \label{$P_z-1$}
P_z (\chi_z) =\boldsymbol {1} - \sum^s_{i=1} \sum_{y\in X^{(z)}_i} \chi_{p^{-1}_i(y)} \,, \quad P_z (\chi_x) = \begin{cases}
\chi_{p^{-1}_i (x_i)}\,, &\text{if } \ \Delta(z,x) = \{i\} \,,\\
\quad \boldsymbol 0\,, & \text{if } \ |\Delta (z,x)| \ge 2 \,,
\end{cases}
\end{equation}
and
\begin{equation} \label{$P_z-2$}
P_z (\chi_M) = \sum^s_{i=1} \chi_{p^{-1}_i (Y_i)} \,,
\end{equation}
where, for each $i \in [1,s]$,  $Y_i$ is a  subset of $X^{(z)}_i$.

\smallskip
For a subset $M \subset X$,  we set
\begin{equation} \label{setting-3}
B_M = \prod_{x\in M} (\chi_x + V) \in \mathcal F (G) \quad \text{and} \quad B:= B_X
\end{equation}
Since  $\chi_x - \chi_z \notin V$ for any two distinct elements of $X$,  all sequences $B_M$ are
squarefree. If $M,\, M',\, M''$ are subsets of $X$, then $B_M =
B_{M'} B_{M''}$ if and only if $M = M' \uplus M''$. Since
\[
\sigma (B_M) = \sum_{x \in M} (\chi_x + V) = \chi_M + V \,,
\]
we have $B_M \in \mathcal B(G)$ if and only if $\chi_M \in V$. In
particular, $\boldsymbol 1 = \chi_X \in V$ implies $B \in \mathcal
B (G) \subset \mathcal B_{\pm} (G)$.  Moreover, for every divisor $T$ of $B$ in $\mathcal F (G)$ there is a unique subset  $M \subset X$ with $T = B_M$.

\smallskip
\subsection{On the atoms of $\mathcal B (G)$ and $\mathcal B_{\pm} (G)$}

\smallskip
\begin{lemma} \label{3.4}~

\begin{enumerate}
\item If $i \in [1,s]$,  $Y_i \subset X_i$ and $M = p^{-1}_i (Y_i)$, then  $B_M \in \mathcal B (G)$, and  $B_M \in \mathcal A \big( \mathcal B (G) \big)$  if and only if  $|Y_i| = 1$.

\item If  $\text{\rm char}(R) \ne 2$,  $M \subset X$, and $B_M \in \mathcal B (G)$, then $M = p^{-1}_i (Y_i)$ for some $i \in [1,s]$ and some $Y_i \subset X_i$.

\item     Let  $\text{char} (R) = 2$,  $M \subsetneq X$ and
$B_M \in \mathcal B (G)$. Then, for every $i \in [1,s]$, there is a set $Y_i \subset X_i$ such that
\[\qquad \quad
M = \bigl\{ (x_1, \ldots, x_s) \in X \colon |\{i \in [1,s]\colon x_i \in Y_i\}| \quad\text{is odd} \,\bigr\} \,.
\]
If $B_M \notin \mathcal A \big( \mathcal B(G) \big)$, then $M = p^{-1}_i (Y_i)$ for some $i \in [1,s]$ and some subset $Y_i \subset X_i$. Moreover, if $M' \subset X$ is such that $B_{M'} \in \mathcal B (G)$ and $B_{M'} \t B_M$, then $M' = p^{-1}_i (Y'_i)$ for some subset $Y'_i \subset Y_i$.
\end{enumerate}
\end{lemma}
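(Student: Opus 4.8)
The plan is to translate $B_M \in \mathcal B(G)$ into a statement about the function $\chi_M \in R^X$. Since $V$ is generated by the fibre indicators and $\sum_{i,y} c_{i,y}\,\chi_{p_i^{-1}(y)}$ takes the value $\sum_{i=1}^s c_{i,x_i}$ at $x = (x_1,\dots,x_s)$, membership $\chi_M \in V$ (equivalently $B_M \in \mathcal B(G)$) is the same as $\chi_M$ being \emph{additively separable}, i.e. $\chi_M(x) = \sum_{i=1}^s c_i(x_i)$ for some maps $c_i \colon X_i \to R$. I would record the standard reformulation that separability is equivalent to the vanishing in $R$ of every mixed second difference
\[
\chi_M(x) - \chi_M(x_{i\to y}) - \chi_M(x_{j\to w}) + \chi_M(x_{i\to y,\,j\to w}) \qquad (i \ne j),
\]
where $x_{i\to y}$ is obtained from $x$ by replacing its $i$-th entry by $y$; necessity is a direct computation and sufficiency follows by a standard induction on $|\Delta(x,z)|$ for a fixed base point $z$. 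For an indicator this difference is an integer in $[-2,2]$, and the entire dichotomy between $\mathrm{char}(R) \neq 2$ and $\mathrm{char}(R) = 2$ comes from deciding when such an integer is $0$ in $R$ (throughout we may assume $R \neq 0$, the zero ring being trivial).

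Part (1) is then quick: $\chi_{p_i^{-1}(Y_i)} = \sum_{y \in Y_i}\chi_{p_i^{-1}(y)} \in V$, and a partition of $Y_i$ splits $B_M$ whenever $|Y_i| \ge 2$; conversely, if $\emptyset \ne M' \subsetneq p_i^{-1}(y)$ had $\chi_{M'} \in V$, then comparing $\chi_{M'}$ at a point of the fibre with its neighbour outside the fibre (change only the $i$-th coordinate; here $|X_i| \ge 2$ and $s \ge 2$) forces $\chi_{M'}$ constant along the fibre, contradicting that it attains both $0$ and $1$. For part (2), since $\mathrm{char}(R)\neq 2$ gives $1,2\neq 0$ in $R$, the mixed second differences, being integers, must already vanish over $\Z$, so $\chi_M(x) = a + \sum_i d_i(x_i)$ with integers $a, d_i(\cdot)$ (normalised at a base point). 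As $\chi_M$ takes only the values $0,1$, the total spread $\sum_i(\max d_i - \min d_i)$ of $\sum_i d_i$ is at most $1$, so at most one $d_i$ is nonconstant and $\chi_M$ depends on a single coordinate, giving $M = p_i^{-1}(Y_i)$.

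For part (3), when $\mathrm{char}(R) = 2$ the mixed second difference vanishes in $R$ exactly when each $2\times 2$ rectangle meets $M$ in an even number of corners, which is precisely the vanishing of the mixed second difference over $\F_2$. Hence $\chi_M$ is additively separable over $\F_2$, say $\chi_M(x) \equiv \sum_i \phi_i(x_i)$, and absorbing the constant into one summand gives the stated form with $Y_i = \phi_i^{-1}(1)$; this proves the existence statement. Call a coordinate $i$ \emph{essential} if $\emptyset \neq Y_i \subsetneq X_i$ (equivalently $\phi_i$ nonconstant); then $M$ is a fibre $p_i^{-1}(Y_i)$ precisely when it has at most one essential coordinate.

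The crux---and the step I expect to be the main obstacle---is the atom dichotomy: I would show that if $M \subsetneq X$ has at least two essential coordinates then its only separable subsets are $\emptyset$ and $M$, so $B_M$ is an atom, whence any non-atom has at most one essential coordinate and is a fibre. To this end, write a separable $A = \Psi^{-1}(1) \subseteq M$ with $\Psi = \sum_i \psi_i$; using a point of the nonempty complement $Z = X \setminus M$ (here $M \subsetneq X$ is crucial) I would force $\psi_k$ to be constant for every inessential coordinate $k$, then---comparing values of $\Psi$ across $Z$---show that $\psi_i$ depends only on $\phi_i(x_i)$, so $\psi_i = \alpha_i + \beta_i\phi_i$, and finally that all $\beta_i$ coincide by moving within $Z$ while flipping two essential coordinates at once. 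This collapses $\Psi$ to $\alpha + \beta\,\chi_M$, forcing $A \in \{\emptyset, M\}$. The last claim follows from the same idea in reverse: if $M = p_i^{-1}(Y_i)$ with $Y_i \subsetneq X_i$ and $M' \subseteq M$ is separable, then fixing the $i$-th coordinate to any value outside $Y_i$ forces $\sum_{k\neq i}\psi_k$ to be constant, hence each $\psi_k$ $(k \neq i)$ is constant and $M' = p_i^{-1}(Y_i')$ with $Y_i' \subseteq Y_i$.
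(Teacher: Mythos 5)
Your proposal is correct; all the steps you outline can be completed, and I checked the two places where you only sketch (the second-difference criterion for separability, and the char-$2$ atom dichotomy). The underlying mechanism is the same as the paper's: the paper's projection $P_z \colon R^X = V \oplus W_z \to V$ and the formulas \eqref{$P_z-1$}--\eqref{$P_z-2$} are exactly your normalised separable representation $\chi_M(x)=\chi_M(z)+\sum_i\bigl(\chi_M(z_{i\to x_i})-\chi_M(z)\bigr)$ based at a point $z\notin M$, and your char-$2$ analysis (forcing $\psi_k$ constant on inessential coordinates via $z_{k\to y}\in Z$, and equating the $\beta_i$ via points of $Z$ that flip two essential coordinates) reproduces the paper's point-evaluations in Lemma~\ref{3.4}.3, just packaged as the cleaner statement that an $\F_2$-separable $\Psi$ vanishing on $\Phi^{-1}(0)$ must equal $\beta\Phi$. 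The genuine divergences are two. First, in part (2) the paper uses the idempotency trick $\chi_M=\chi_M^2$ together with linear independence of the $\chi_{p_{\{i,j\}}^{-1}(y)}$ in $W_z$ to kill the cross terms $2\chi_{p_i^{-1}(Y_i)}\chi_{p_j^{-1}(Y_j)}$; you instead descend to $\Z$ (legitimate, since the second differences are integers in $[-2,2]$ and $\mathrm{char}(R)\ne 2$, $R\ne 0$ force them to vanish over $\Z$) and bound the spread of a $\{0,1\}$-valued separable integer function --- a more elementary argument that avoids the ring structure of $R^X$ entirely. Second, your proof of the atom criterion in part (1) (constancy of $\chi_{M'}$ along the fibre, obtained by stepping outside the fibre in the $i$-th coordinate) is self-contained and characteristic-free, whereas the paper's one-line justification there implicitly leans on the classification of zero-sum divisors from parts (2) and (3). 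What your route buys is uniformity (one separability criterion drives all three parts) at the cost of having to state and prove that criterion; the paper's route gets parts (2) and (3) slightly faster from the ready-made direct sum decomposition.
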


\begin{proof}
1. If $i \in [1,s]$,  $ Y_i \subset X_i$ and $M = p^{-1}_i (Y_i)$, then
\[
\chi_M = \sum_{y \in Y_i} \chi_{p^{-1}_i (y)} \in V
\]
and thus $B_M \in \mathcal B (G)$. Clearly, $B_{\emptyset} = 1 \in
\mathcal B (G)$. If $i,\,j \in [1,s]$, \ $\emptyset \ne Y_i
\subset X_i$ and $\emptyset \ne Y'_j \subset X_j$, then $p^{-1}_i
(Y_i) \subset p^{-1}_j (Y'_j)$ if and only if $i = j$ and $Y_i
\subset Y'_j$. Hence $B_M \in \mathcal A \big( \mathcal B (G) \big)$ if and only if
$|Y_i| = 1$.

2. Clearly, we have  $X = p^{-1}_1 (X_1)$. Now suppose
that $M \subsetneq X$ and $\chi_M \in V$. If $z \in X
\setminus M$, then by \eqref{$P_z-2$},
\[
\chi_M = P_z (\chi_M) = \sum^s_{i=1} \chi_{p^{-1}_i (Y_i)} \,,
\]
where $Y_i \subset X_i$ for each $i \in [1,s]$. Since
\[
\chi_M = \chi^2_M = \sum^s_{i=1} \chi_{p^{-1}_i (Y_i)} + 2 \sum^s_{\substack{i,j=1 \\ i<j}} \chi_{p^{-1}_i (Y_i)} \chi_{p^{-1}_j (Y_j)} \,,
\]
it follows that
\[
0 = 2 \sum^s_{\substack{i,j=1 \\ i<j}} \chi_{p^{-1}_i (Y_i)} \chi_{p^{-1}_j (Y_j)} = 2 \sum^s_{\substack{i,j=1 \\ i<j}}
\sum_{y \in Y_i \times Y_j} \chi_{p^{-1}_{\{i,j\}}} (y) \,.
\]
By \eqref{$P_z-1$}, this can happen only if $Y_i \times Y_j =
\emptyset$ for all distinct $i, j \in [1,s]$. Hence there is
at most one $i \in [1,s]$ with $Y_i \ne \emptyset$.

3. Let $z \in X \setminus M$. If $B_M
\in \mathcal B (G)$, then \eqref{$P_z-2$} implies
\[
\chi_M = P_z (\chi_M) = \sum^s_{i=1} \chi_{p^{-1}_i (Y_i)} \,,
\]
where $Y_i \subset X^{(z)}_i$ for all $i \in [1,s]$. For $x =(x_1, \ldots, x_s) \in X$ we have
\[
\chi_M (x) = \sum^s_{i=1} \chi_{p^{-1}_i (Y_i)} (x) \ = \ \bigl|
\{ i \in [1,s] \colon x_i \in Y_i \} \bigr|\, 1_R \,,
\]
and therefore \ $M = \bigl\{ (x_1, \ldots, x_s) \in X  \colon |\{ i \in [1,s] \colon x_i \in Y_i\}| \quad\text{is odd} \bigr\}$.

\smallskip
If $B_M \notin \mathcal A \big( \mathcal B (G) \big)$, then there is a set $\emptyset \ne M' \subsetneq M$ such that $B_{M'} \in \mathcal B (G)$. For every $i \in [1,s]$, let $Y'_i \subset X^{(z)}_i$ be such that
\[
M' = \bigl\{ (x_1, \ldots, x_s) \in X  \,\colon  \,|\{ i\in [1,s] \colon x_i \in Y'_i \}| \quad\text{is odd}\, \bigr\} \,.
\]
If $j \in [1,s]$ and $y \in Y'_j$, then \ $(z_1, \ldots, z_{j-1},
y, z_{j+1}, \ldots, z_s) \in M' \subset M$, and thus $y \in Y_j$.
Hence we obtain $Y'_j \subset Y_j$ for all $j \in [1,s]$, and $M'
\subsetneq M$ implies $Y'_i \subsetneq Y_i$ for some $i \in
[1,s]$, say $Y'_1 \subsetneq Y_1$. We assert that $Y_j = Y'_j =
\emptyset$ for all $j \in [2,s]$. It suffices to do the proof for
$j = 2$. Assume the contrary. If $Y'_2 \ne \emptyset$, let $y_1
\in Y_1 \setminus Y'_1$ and $y_2 \in Y'_2$. Then $(y_1, y_2, z_3,
\ldots, z_s) \in M' \setminus M$, a contradiction. Hence $Y'_2 =
\emptyset$. If $Y_2 \ne \emptyset$, let $y_1 \in Y'_1$ and $y_2
\in Y_2$. Then $(y_1, y_2, z_3, \ldots, z_s) \in M' \setminus M$,
again a contradiction.
\end{proof}

\smallskip
\begin{lemma} \label{3.5}
Suppose that $\text{\rm char} (R) \ne 2$.  For a subset $M \subset X$, the following conditions are equivalent.
\begin{enumerate}
\item[(a)] $B_M \in \mathcal B_{\pm} (G) \setminus \mathcal B (G)$ and  $B_M$ divides $B$ in $\mathcal B_{\pm} (G)$.

\item[(b)] There are $i, j \in [1,s]$ distinct and $\emptyset \ne Y_i \subsetneq X_i$, $\emptyset \ne Y_j \subsetneq X_j$ such that $M = \Delta (p_i^{-1} (Y_i), p_j^{-1} (Y_j))$.
\end{enumerate}
\end{lemma}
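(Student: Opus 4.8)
The plan is to translate membership in $\mathcal B(G)$ and $\mathcal B_{\pm}(G)$ into a concrete statement about functions on $X$, and then run the two implications separately. First I would record the description of $V$ coming from \eqref{setting-2}: since $\chi_{p_i^{-1}(y)}(x) = [x_i = y]$, a function $\varphi \in R^X$ lies in $V$ if and only if $\varphi(x) = \sum_{i=1}^s g_i(x_i)$ for some maps $g_i \colon X_i \to R$, i.e.\ $\varphi$ is additively separable. Combined with $\sigma(B_N) = \chi_N + V$ from \eqref{setting-3}, this gives, for $N \subset X$, that $B_N \in \mathcal B(G)$ iff $\chi_N \in V$, and $B_N \in \mathcal B_{\pm}(G)$ iff there is a partition $N = N^+ \uplus N^-$ with $\chi_{N^+} - \chi_{N^-} \in V$ (the sign pattern records the $\varepsilon_i$). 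Finally, since $B = B_X$ is squarefree, the only factorization $B = B_M C$ in $\mathcal F(G)$ has $C = B_{X \setminus M}$; hence $B_M$ divides $B$ in $\mathcal B_{\pm}(G)$ iff $B_{X \setminus M} \in \mathcal B_{\pm}(G)$. Thus (a) becomes three conditions: $\chi_{M^+} - \chi_{M^-} \in V$ for some partition of $M$; $\chi_M \notin V$; and $\chi_{N^+} - \chi_{N^-} \in V$ for some partition of $X \setminus M$.

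For (b) $\Rightarrow$ (a), set $A = p_i^{-1}(Y_i)$, $C = p_j^{-1}(Y_j)$ and take the partition $M^+ = A \setminus C$, $M^- = C \setminus A$ of $M = \Delta(A,C)$. Then $\chi_{M^+} - \chi_{M^-} = \chi_A - \chi_C \in V$, so $B_M \in \mathcal B_{\pm}(G)$; and since $X \setminus M = \Delta(A, X \setminus C) = \Delta(p_i^{-1}(Y_i), p_j^{-1}(X_j \setminus Y_j))$ has the same shape, the same computation gives $B_{X\setminus M} \in \mathcal B_{\pm}(G)$, i.e.\ $B_M \mid B$. Finally $\chi_M \notin V$: by Lemma \ref{3.4}.2 any $N$ with $B_N \in \mathcal B(G)$ is a single cylinder $p_k^{-1}(Y_k)$, whereas evaluating on the four points of a $2 \times 2$ square in coordinates $i,j$ (using $\emptyset \ne Y_i \subsetneq X_i$, $\emptyset \ne Y_j \subsetneq X_j$) shows $M = \Delta(A,C)$ is not of this form. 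Hence $B_M \in \mathcal B_{\pm}(G) \setminus \mathcal B(G)$.

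For (a) $\Rightarrow$ (b), the harder direction, I would start from the separable function $\psi = \chi_{M^+} - \chi_{M^-} = \sum_i g_i(x_i) \in V$, which is $\{-1,0,1\}$-valued with support exactly $M$. Since $\chi_M \notin V$ we have $\emptyset \ne M \subsetneq X$; choosing $z \in X \setminus M$ and subtracting constants I may assume $g_i(z_i) = 0$, whence each $g_i$ is $\{-1,0,1\}$-valued and $Y_i^{\pm} := g_i^{-1}(\pm 1) \subset X_i \setminus \{z_i\}$. Squaring, $\chi_M = \psi^2 = \sum_i g_i^2 + 2\sum_{i<j} g_i g_j$ with $\sum_i g_i^2 \in V$, so $2\sum_{i<j} g_i g_j \equiv \chi_M \pmod V$. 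Reading off the $W_z$-component through $R^X = V \oplus W_z$ and the explicit description of $P_z$ recorded above, the products $g_i g_j$ contribute linearly independent terms $\chi_{p_{\{i,j\}}^{-1}(y,y')}$; since $\chi_M \notin V$ this component is nonzero, so at least two coordinates are essential. The goal is then to show the essential support is exactly a pair $\{i,j\}$ and that, after normalization, $g_i$ and $g_j$ are (up to sign) characteristic functions of proper nonempty $Y_i \subset X_i$, $Y_j \subset X_j$, yielding $M = \Delta(p_i^{-1}(Y_i), p_j^{-1}(Y_j))$; properness and nonemptiness then follow because a degenerate $Y$ collapses $M$ to a single cylinder, contradicting $\chi_M \notin V$.

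The main obstacle is exactly this pinning-down. In characteristic zero the relation $|\{i : Y_i^+ \ne \emptyset\}|\cdot 1_R \in \{-1,0,1\}$ forces at most one positive and one negative essential coordinate immediately, but in positive characteristic (already for $\mathrm{char}(R) = 3$) there exist separable $\{-1,0,1\}$-valued functions whose support is an ``odd-count'' set on three or more coordinates and is \emph{not} a symmetric difference. The hypothesis $B_{X\setminus M} \in \mathcal B_{\pm}(G)$ — that the complement also carries a separable signed indicator $\phi = \sum_i g_i'(x_i)$ with $\mathrm{supp}\,\phi = X \setminus M$ — is precisely what excludes these. Concretely I would exploit that $\psi \phi = 0$ while $(\psi+\phi)(x) \in \{-1,1\}$, so $(\psi+\phi)^2 = \boldsymbol 1$ is a separable function; passing once more to $W_z$-components then rules out three or more essential coordinates and forces the two-coordinate symmetric-difference pattern. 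Checking that this complement argument is airtight for every ring with $\mathrm{char}(R) \ne 2$ (where $2$ may be a zero divisor) is the delicate point of the proof.
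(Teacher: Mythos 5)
Your dictionary ($V$ is the set of additively separable functions, $B_N \in \mathcal B_{\pm}(G)$ iff some signed indicator $\chi_{N^+}-\chi_{N^-}$ lies in $V$, and divisibility of $B$ means the complement $X\setminus M$ also carries one) is exactly right, and your proof of (b) $\Rightarrow$ (a) is complete; deducing $\chi_M\notin V$ from Lemma \ref{3.4}.2 plus a $2\times2$ square is, if anything, more robust than the paper's computation. The first half of your (a) $\Rightarrow$ (b) also matches the paper's argument: normalizing at $z\notin M$, the slice sets $Y_i^{\pm}$ give $\chi_{M^+}-\chi_{M^-}=\sum_i\chi_{p_i^{-1}(Y_i^+)}-\sum_i\chi_{p_i^{-1}(Y_i^-)}$, and a point with two coordinates in $Y_i^+$ and $Y_j^+$ yields $2\cdot 1_R\in\{-1\cdot 1_R,0,1_R\}$. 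But note this contradiction is available not only in characteristic zero: it fails only if one of $1_R$, $2\cdot 1_R$, $3\cdot 1_R$ vanishes, i.e.\ it works for every nonzero $R$ with $\mathrm{char}(R)\notin\{2,3\}$, zero divisors notwithstanding. So the only genuinely open case is $3\cdot 1_R=0$, where $2=-1$ is a unit in $R$ --- your worry about $2$ being a zero divisor never materializes.

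The gap is that this remaining case is announced but not proved. You correctly observe that in characteristic $3$ there are separable $\{-1,0,1\}$-valued functions whose support is an ``odd-count'' set over three or more coordinates, and you correctly identify the divisibility hypothesis as the only thing that can exclude them; but the proposed mechanism --- $(\psi+\phi)^2=\boldsymbol 1$ followed by ``passing once more to $W_z$-components'' --- is not carried out, and you yourself flag it as the delicate point. (It is also off at the level of normalization: $z\notin M$ forces $\phi(z)=\pm1$, so the functions $h_i=g_i+g_i'$ cannot all be made to vanish at $z_i$.) The paper closes this case with a short argument that avoids $W_z$ entirely: write $X=M^+\uplus M^-\uplus N^+\uplus N^-$ with $\sigma(B_{M^+})=\sigma(B_{M^-})$ and $\sigma(B_{N^+})=\sigma(B_{N^-})$; then $0=\sigma(B)=2\big(\sigma(B_{M^+})+\sigma(B_{N^+})\big)$, and since $2$ is a unit for odd characteristic, $B_{M^+}B_{N^+}$ and $B_{M^+}B_{N^-}$ are honest zero-sum sequences, so by Lemma \ref{3.4}.2 the sets $M^+\cup N^+$ and $M^+\cup N^-$ are cylinders $p_i^{-1}(Y_i)$ and $p_j^{-1}(X_j\setminus W_j)$, whence $M=M^+\cup M^-=\Delta\big(p_i^{-1}(Y_i),p_j^{-1}(W_j)\big)$. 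In your language this is the identity $2\chi_{M^+\cup N^+}=\psi+\phi+\boldsymbol 1\in V$, which is what you should extract from $\psi+\phi$ instead of its square; the final checks that $i\ne j$ and that the $Y$'s are proper and nonempty then follow, as you say, from $\chi_M\notin V$.
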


\begin{proof}
(b) $\Longrightarrow$ (a) We have
\[
M = \Delta (p_i^{-1} (Y_i), p_j^{-1} (Y_j)) = \underbrace{\big(p_i^{-1} (Y_i) \setminus p_j^{-1} (Y_j) \big)}_{M^+} \cup \underbrace{\big( p_j^{-1} (Y_j) \setminus p_i^{-1} (Y_i) \big)}_{M^-} \,.
\]
Then $M = M^+ \cup M^+$ and $\chi_{M^+} - \chi_{M^-} = \chi_{p_i^{-1} (Y_i)} - \chi_{p_j^{-1} (Y_j)} \in V$, whence $\prod_{x \in M} p (\chi_x) \in \mathcal B_{\pm} (G)$. The complement $X \setminus M$ has the same form, whence it also lies in $\mathcal B_{\pm} (G)$ and so $B_M$ is a divisor of $B$ in $\mathcal B_{\pm} (G)$.

Assume to the contrary that $\prod_{x \in M} p (\chi_x) \in \mathcal B (G)$. Then $\chi_{M^+} + \chi_{M^-} = 0 = \chi_{M^+} - \chi_{M^-}$, whence $2 \chi_{M^+} = 2 \chi_{M^-} = 0$. Since $\textrm{char} (R) \ne 2$, it follows that $M^+ = M^- = \emptyset$ and so $M = \emptyset$. On the other hand, $\emptyset \ne Y_i \subsetneq X_i$ and $\emptyset \ne Y_j \subsetneq X_j$ implies that $M = \Delta (p_i^{-1} (Y_i), p_j^{-1} (Y_j)) \ne \emptyset$, a contradiction.

\medskip
(a) $\Longrightarrow$ (b) We distinguish two cases (these cases have a large overlap).

\smallskip
\noindent
CASE 1: $\text{\rm char} (R) \ne 3$.

Let $B_M \in \mathcal B_{\pm} (G) \setminus \mathcal B (G)$.
Since $\prod_{x \in M} p ( \chi_x ) \in \mathcal B_{\pm} (G)$, there are $M^+, M^- \subset M$ such that $M^+ \uplus M^- = M$ and $\chi_{M^+} - \chi_{M^-} \in V$. Since $\prod_{x \in M} p ( \chi_x ) \notin \mathcal B (G)$, we have $M^+ \ne \emptyset$, $M^- \ne \emptyset$, and $M \ne X$.

Let $z \in X \setminus M$ be given. By \eqref{$P_z-1$}, we have, for every $x \in M$, that
\[
\chi_x \equiv \begin{cases}
             0 & |\Delta (x,z)|\ge 2 \quad \mod W_z \\
              \chi_{p_i^{-1} (x_i)} & \Delta (x, z) = \{i\} \quad \mod W_z
              \end{cases}
\]
Therefore, it follows that
\[
\begin{aligned}
\chi_{M^+} - \chi_{M^-} & = \sum_{x \in M^+} \chi_x - \sum_{x \in M^-} \chi_x \equiv \sum_{x \in M^+, |\Delta (x,z)|=1} \chi_x - \sum_{x \in M^-, |\Delta (x,z)|=1} \chi_x \\
 & = \sum_{i=1}^s \sum_{x \in M^+, \Delta (x,z) = \{i\}} \chi_{p_i^{-1}(x_i)} \ -  \
     \sum_{i=1}^s \sum_{x \in M^-, \Delta (x,z) = \{i\}} \chi_{p_i^{-1}(x_i)} \quad \mod W_z \,.
\end{aligned}
\]
For every $i \in [1,s]$, we define
\[
\begin{aligned}
Y_i^+ & = \{ y_i \in X_i \colon (z_1, \ldots, z_{i-1},y_i, z_{i+1}, \ldots, z_n) \in M^+ \} \quad \text{and} \\
Y_i^- & = \{ y_i \in X_i \colon (z_1, \ldots, z_{i-1},y_i, z_{i+1}, \ldots, z_n) \in M^- \}
\end{aligned}
\]
and obtain that
\[
\chi_{M^+} - \chi_{M^-} \equiv \sum_{i=1}^s \chi_{p_i^{-1} (Y_i^+)} - \sum_{i=1}^s \chi_{p_i^{-1} (Y_i^-)} \ \mod W_z \,.
\]
Since $\chi_{M^+} - \chi_{M^-}$, $\sum_{i=1}^s \chi_{p_i^{-1} (Y_i^+)}$, and $\sum_{i=1}^s \chi_{p_i^{-1} (Y_i^-)}$ are in $V$, it follows that
\[
\chi_{M^+} - \chi_{M^-} = \sum_{i=1}^s \chi_{p_i^{-1} (Y_i^+)} - \sum_{i=1}^s \chi_{p_i^{-1} (Y_i^-)}  \,.
\]
Assume to the contrary that there are distinct $i, j \in [1,s]$ such that $Y_i^+ \ne \emptyset$ and $Y_j^+ \ne \emptyset$. We pick $u \in X$ with $u_i \in Y_i^+$, $u_j \in Y_j^+$, and $u_k = z_k$ for all $k \in [1,s] \setminus \{i,j\}$. Since $z \notin M$, we have $z_k \notin Y_k^+ \cup Y_k^-$ and therefore
\[
u \notin p_k^{-1} (Y_k^+) \cup p_k^{-1} (Y_k^-) \quad \text{for any $k \in [1,s] \setminus \{i,j\}$} \,.
\]
Since $M^+ \cap M^- = \emptyset$, we obtain that $Y_k^+ \cap Y_k^- = \emptyset$ for all $k \in [1,s]$. This implies that $u \notin p_k^{-1} (Y_k^-)$ for all $k \in [1,s]$, but $u \in p_i^{-1} (Y_i^+)$ and $u \in p_j^{-1} (Y_j^+)$. Therefore, we obtain that
\[
2 = \sum_{k=1}^s \chi_{p_k^{-1} (Y_k^+)} (u) - \sum_{k=1}^s \chi_{p_k^{-1} (Y_k^-)} (u) = \chi_{M^+} (u) - \chi_{M^-} (u) \in \{-1, 0, 1\} \,,
\]
a contradiction to $\text{\rm char} (R) \ne 3$.

The same arguments yield a contradiction, if we assume that there are distinct $i, j \in [1,s]$ such that $Y_i^- \ne \emptyset$ and $Y_j^- \ne \emptyset$.

Therefore, there are $i, j \in [1,s]$ with
\[
\chi_{M^+} - \chi_{M^-} = \chi_{p_i^{-1} (Y_i^+)} - \chi_{p_j^{-1} (Y_j^-)} =
\chi_{p_i^{-1} (Y_i^+) \setminus p_j^{-1} (Y_j^-)} - \chi_{p_j^{-1} (Y_j^-) \setminus p_i^{-1} (Y_i^+)} \,.
\]
This implies that
\[
M^+ = p_i^{-1} (Y_i^+) \setminus p_j^{-1} (Y_j^-) \quad \text{and} \quad M^- = p_j^{-1} (Y_j^-) \setminus p_i^{-1} (Y_i^+) \,,
\]
whence
\[
M = \Delta \big( p_i^{-1} (Y_i^+), p_j^{-1} (Y_j^-) \big) \,.
\]
Assume to the contrary that $i=j$. Then we obtain
\[
M = \Delta \big( p_i^{-1} (Y_i^+), p_i^{-1} (Y_i^-) \big) = p_i^{-1} ( \Delta (Y_i^+, Y_i^-) ) \,,
\]
which implies that $\chi_M \in V$ and hence $\prod_{x \in M} p (\chi_x) \in \mathcal B (G)$, a contradiction.

Assume to the contrary that $Y_i^+ = \emptyset $ or that $Y_i^+ = X_i$. This implies that
\[
M = \Delta \big( p_i^{-1} (Y_i^+), p_j^{-1} (Y_j^-) \big) = p_j^{-1} (Y_j^-) \,,
\]
which yields $\prod_{x \in M} p (\chi_x) \in \mathcal B (G)$, a contradiction. The same argument shows that $\emptyset \ne Y_j^- \ne X_j$.

\smallskip
\noindent
CASE 2: $\textrm{char} (R)$ is either zero or odd.

Since $B_M$ divides $B$, there are pairwise disjoint sets $M^+, M^-, N^+$, $N^- \subset X$ with
\[
M^+ \uplus M^- \uplus N^+ \uplus N^- = X
\]
and with $\sigma (B_{M^+}) = \sigma (B_{M^-})$ and $\sigma (B_{N^+}) = \sigma (B_{N^-})$. This implies that
\[
0=\sigma (B) = \sigma (B_{M^+ \uplus M^- \uplus N^+ \uplus N^-}) = \sigma (B_{M^+}) + \sigma (B_{M^-})+\sigma (B_{N^+}) + \sigma (B_{N^-}) = 2 \big( \sigma (B_{M^+}) + \sigma (B_{N^+}) \big) \,.
\]
Since $\textrm{char} (R)$ is either zero or odd, it follows that $\sigma (B_{M^+}) + \sigma (B_{N^+}) = 0$. Thus, $B_{M^+}B_{N^+} \in \mathcal B (G)$, whence $M^+\cup N^+ = p_i^{-1}(Y_i)$ for some $i \in [1,s]$ and some $Y_i \subset X_i$. For its complement, we get $M \setminus (M^+ \cup N^+) = M^- \cup N^- = p_i^{-1} (X_i \setminus Y_i)$. Similarly, we get, for some $j \in [1,s]$ and some $W_j \subset X_j$,
\[
M^+ \cup N^- = p_j^{-1} (X_j \setminus W_j) \quad \text{and} \quad M^- \cup N^+ = p_j^{-1} (W_j) \,.
\]
Therefore, $M^- = (M^- \cup N^-) \cap (M^- \cup N^+) = p_i^{-1} (X_i \setminus Y_i) \cap p_j^{-1} (W_j)$ and
\[
M=M^+ \cup M^- = \Delta \big( p_i^{-1}(Y_i), p_j^{-1} (W_j) \big) \,. \qedhere
\]
\end{proof}

\smallskip
\begin{lemma} \label{3.6}
Let $i, j \in [1,s]$ distinct, $\emptyset \ne Y_i \subsetneq X_i$, $\emptyset \ne Y_j \subsetneq X_j$ and $M = \Delta (p_i^{-1} (Y_i), p_j^{-1} (Y_j))$. Then $B_M \in \mathcal A \big( \mathcal B_{\pm} (G) \big)$.
\end{lemma}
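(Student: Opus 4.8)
The plan is to assume a nontrivial factorization $B_M = B_{M_1}B_{M_2}$ in $\mathcal B_{\pm}(G)$ and show it forces $M_1 = M$. By Lemma \ref{3.5} (direction (b) $\Rightarrow$ (a)), we already know $B_M \in \mathcal B_{\pm}(G) \setminus \mathcal B(G)$ and that $B_M$ divides $B$ in $\mathcal B_{\pm}(G)$, so $B_M$ is neither trivial nor a unit. Writing $M = M_1 \uplus M_2$ with $M_1, M_2 \ne \emptyset$ and $B_{M_1}, B_{M_2} \in \mathcal B_{\pm}(G)$, each $B_{M_r}$ divides $B_M$ and hence divides $B$. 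The reformulation I would record first is that $M$ depends only on the coordinates $i$ and $j$: one has $M = p_{\{i,j\}}^{-1}(S)$, where $S = \big( Y_i \times (X_j \setminus Y_j) \big) \uplus \big( (X_i \setminus Y_i) \times Y_j \big) \subset X_i \times X_j$ is the set of pairs $(u,v)$ for which exactly one of $u \in Y_i$, $v \in Y_j$ holds.

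Next I would classify the divisor $B_{M_1}$ using the earlier lemmas (so the whole argument lives, as in Lemma \ref{3.5}, under $\mathrm{char}(R) \ne 2$). If $B_{M_1} \in \mathcal B(G)$, then Lemma \ref{3.4}.2 gives $M_1 = p_k^{-1}(Z)$ for some $k$ and some $\emptyset \ne Z \subset X_k$; a short free-coordinate argument rules this out: if $k \notin \{i,j\}$ one chooses $x \in M_1$ with $x_i \in Y_i$ and $x_j \in Y_j$, so $x \notin M$, a contradiction, while if $k \in \{i,j\}$, say $k=i$, then letting the $j$-th coordinate of a point of $M_1$ run through $Y_j$ and through $X_j \setminus Y_j$ forces simultaneously $Z \subset X_i \setminus Y_i$ and $Z \subset Y_i$, which is absurd. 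If instead $B_{M_1} \in \mathcal B_{\pm}(G) \setminus \mathcal B(G)$, Lemma \ref{3.5} gives $M_1 = \Delta\big( p_a^{-1}(U), p_b^{-1}(W) \big)$ with $a \ne b$ and $\emptyset \ne U \subsetneq X_a$, $\emptyset \ne W \subsetneq X_b$. The crucial structural step is to show $\{a,b\} = \{i,j\}$: since $M_1$ is a cylinder over $\{a,b\}$ and $M_1 \subset M$, any coordinate $\ell \in \{i,j\} \setminus \{a,b\}$ would be free on $M_1$, and letting it run over all of $X_\ell$ would force a full row $\{u\} \times X_j$ or column $X_i \times \{v\}$ of $X_i \times X_j$ to lie in $S$ — impossible, since $S$ omits both $Y_i \times Y_j$ and $(X_i \setminus Y_i) \times (X_j \setminus Y_j)$, and these are nonempty. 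Hence $\{i,j\} \subset \{a,b\}$, so $\{a,b\} = \{i,j\}$; relabelling, $a = i$, $b = j$.

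After this reduction both $M$ and $M_1$ are cylinders over $\{i,j\}$, so the factorization $M = M_1 \uplus M_2$ is equivalent to a disjoint-union identity $S = S_1 \uplus S_2$ inside $X_i \times X_j$, where $S_1 = \big( U \times (X_j \setminus W) \big) \cup \big( (X_i \setminus U) \times W \big)$ is again a symmetric-difference pattern with $S_1 \subset S$. The heart of the proof — and the step I expect to be the main obstacle — is the rigidity claim that any such $S_1 \subset S$, with $U, W$ proper and nonempty, must already equal $S$. I would prove it by viewing membership as $\F_2$-valued parities: $(u,v) \in S$ precisely when exactly one of $u \in Y_i$, $v \in Y_j$ holds, and $(u,v) \in S_1$ precisely when exactly one of $u \in U$, $v \in W$ holds. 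The containment $S_1 \subset S$ then says that whenever $u \in Y_i$ and $v \in Y_j$ have the same truth value, so do $u \in U$ and $v \in W$. Testing this on pairs with $u \notin Y_i$, $v \notin Y_j$, and on pairs with $u \in Y_i$, $v \in Y_j$ — both available since $Y_i$, $X_i \setminus Y_i$, $Y_j$, $X_j \setminus Y_j$ are nonempty — forces $U$ to be constant on each of $Y_i$ and $X_i \setminus Y_i$, and $W$ constant on each of $Y_j$ and $X_j \setminus Y_j$, with matching constants. As $U \subsetneq X_i$ and $W \subsetneq X_j$ are nonempty, the only possibilities are $U = Y_i$, $W = Y_j$ or $U = X_i \setminus Y_i$, $W = X_j \setminus Y_j$, and in both cases $S_1 = S$.

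Consequently $S_1 = S$, so $M_1 = M$ and $M_2 = \emptyset$, contradicting $M_2 \ne \emptyset$; hence $B_M$ admits no nontrivial factorization and $B_M \in \mathcal A\big( \mathcal B_{\pm}(G) \big)$. I would remark that the delicate point is really the XOR-rigidity claim, while the two free-coordinate reductions are the structural simplifications that make it applicable, and that the degenerate case $s = 2$ needs no separate treatment, since then $\{a,b\} = \{i,j\}$ and $k \in \{i,j\}$ hold automatically.
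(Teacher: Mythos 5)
Your proof is correct and follows essentially the same route as the paper's: classify a putative proper divisor $B_{M_1}$ via Lemmas \ref{3.4} and \ref{3.5}, eliminate the configurations where the supporting coordinates differ from $\{i,j\}$ by letting a free coordinate vary, and then show that containment of the two symmetric-difference sets forces equality. The only difference is presentational --- the paper's element-chasing in its Case 2.3 (with a WLOG via complementation) is repackaged in your final step as the $\F_2$-parity rigidity claim, which arrives at the same two possibilities $U=Y_i,\ W=Y_j$ or their complements.
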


\begin{proof}
Let $N \subset M$ be a nonempty subset with $B_N \in \mathcal B_{\pm} (G)$. We have to show that $N=M$ and to do so, we distinguish two cases.

\smallskip
\noindent
CASE 1: $B_N \in \mathcal B (G)$.

By Lemma \ref{3.4}, there are $k \in [1,s]$ and $W_k \subset X_k$ such that $N = p_k^{-1} (W_k)$. Since $\emptyset \ne N \ne X$, we obtain that $\emptyset \ne W_k \subsetneq X_k$, whence
\[
p_k^{-1} (W_k) \subset \Delta (p_i^{-1} (Y_i), p_j^{-1} (Y_j) ) \,.
\]
\smallskip
\noindent
CASE 1.1: $k \in \{i,j\}$, say $k=i$.

Let $x_i \in W_i$. We choose $y_j \in X_j \setminus Y_j$ and $u \in X$ with $u_i=w_i$ and $u_j =y_j$. Then we have $u \in p_k^{-1} (W_k) \subset \Delta (p_i^{-1} (Y_i), p_j^{-1} (Y_j) )$. Since $y_j \notin Y_j$, it follows that $w_i=u_i \in Y_i$, whence $W_i \subset Y_i$. Now we pick some $u \in X$ with $u_i \in W_i \subset Y_i$ and $u_j \in Y_j$. This implies that
\[
u \in p_i^{-1} (W_i) \quad \text{but} \quad u \notin \Delta (p_i^{-1} (Y_i), p_j^{-1} (Y_j) ) \,,
\]
a contradiction.

\smallskip
\noindent
CASE 1.2: $k \notin \{i,j\}$.

We pick some $u \in X$ with $u_k \in W_k$, $u_i \in Y_i$, and $u_j \in Y_j$. Then
\[
u \in p_k^{-1} (W_k) \quad \text{but} \quad u \notin \Delta (p_i^{-1} (Y_i), p_j^{-1} (Y_j) ) \,,
\]
a contradiction.

\smallskip
\noindent
CASE 2:  $B_N \notin \mathcal B (G)$.

By Lemma \ref{3.4}, there are $k, \ell \in [1,s]$ distinct, $\emptyset \ne W_k \subsetneq X_k$, and $\emptyset \ne W_{\ell} \subsetneq X_{\ell}$ such that $N = \Delta (p_k^{-1} (W_k), p_{\ell}^{-1} (W_{\ell}))$, and we obtain that
\[
\Delta (p_k^{-1} (W_k), p_{\ell}^{-1} (W_{\ell})) \subset \Delta (p_i^{-1} (Y_i), p_j^{-1} (Y_j)) \,.
\]
We distinguish three cases.

\smallskip
\noindent
CASE 2.1: $\{k, \ell\} \cap \{i,j \} = \emptyset$.

We pick $u \in X$ with $u_k \in W_k$, $u_{\ell} \notin W_{\ell}$, $u_i \in Y_i$, and $u_j \in Y_j$. Then
\[
u \in \Delta (p_k^{-1} (W_k), p_{\ell}^{-1} (W_{\ell})) \quad \text{but} \quad u \notin \Delta (p_i^{-1} (Y_i), p_j^{-1} (Y_j)) \,,
\]
a contradiction.

\smallskip
\noindent
CASE 2.2:  $|\{k, \ell\} \cap \{i,j \}| = 1$, say $k=i$.

Let $w_i \in W_i$. We choose $u \in X$ with $u_i =w_i$, $u_{\ell} \notin W_{\ell}$, and $u_j \notin Y_j$. Then
\[
u \in \Delta (p_i^{-1} (W_i), p_{\ell}^{-1} (W_{\ell})) \quad \text{and} \quad u \in \Delta (p_i^{-1} (Y_i), p_j^{-1} (Y_j)) \,.
\]
Since $u_j \notin Y_j$, it follows that $w_i=u_i \in Y_i$, and hence $W_i \subset Y_i$.

Now we choose $u \in X$ with $u_i \in W_i$, $u_{\ell} \notin W_{\ell}$, and $u_j \in Y_j$. Then
\[
u \in \Delta (p_i^{-1} (W_i), p_{\ell}^{-1} (W_{\ell})) \quad \text{but} \quad u \notin \Delta (p_i^{-1} (Y_i), p_{\ell}^{-1} (W_{\ell})) \,,
\]
a contradiction. 

\smallskip
\noindent
CASE 2.3:  $\{k, \ell\} = \{i,j \}$, say $k=i$ and $\ell = j$.

Then
\[
\Delta \big( p_i^{-1} (W_i), p_j^{-1} (W_j) \big) \subset \Delta \big( p_i^{-1} (Y_i), p_j^{-1} (Y_j) \big) \,.
\]
Since $W_i$ is nonempty, we have $W_i \not\subset Y_i$ or $W_i \not\subset X_i \setminus Y_i$. Since
\[
\Delta \big( p_i^{-1} (Y_i), p_j^{-1} (Y_j) \big) = \Delta \big( p_i^{-1} (X_i \setminus Y_i), p_j^{-1} (X_j \setminus Y_j) \big) \,,
\]
we may assume that $W_i \not\subset  X_i \setminus Y_i$. We choose
\[
x_j \in X_j \setminus W_j, x_i \in W_i \cap Y_i, \ \text{and} \ u \in X \ \text{with} \ u_i=x_i \ \text{and} \ u_j = x_j \,.
\]
Then $u \in \Delta ( p_i^{-1} (W_i), p_j^{-1} (W_j))$ and hence $u \in \Delta ( p_i^{-1} (Y_i), p_j^{-1} (Y_j))$. Since $u_i=x_i \in Y_i$, we obtain that $x_j = u_j \in X_j \setminus Y_j$. Thus, it follows that $X_j \setminus W_j \subset X_j \setminus Y_j$ and so $Y_j \subset W_j$.

Now we choose some $w_i \in W_i$. Let $u \in X$ with $u_i=w_i$ and $u_j \in X_j \setminus W_j$. Thus,
\[
u \in \Delta ( p_i^{-1} (W_i), p_j^{-1} (W_j) ) \quad \text{and hence} \quad u \in \Delta (p_i^{-1} (Y_i), p_j^{-1} (Y_j)) \,,
\]
and, because $u_j \in X_j \setminus W_j$, we finally obtain that $w_i=u_i \in Y_i$. Summing up, we obtained that $W_i \subset Y_i$. Since $\emptyset \ne Y_j \subset W_j$, we have that $W_j \not\subset X_j \setminus Y_j$.

Interchanging $i$ and $j$ in the above arguments, we also obtain that $Y_i \subset W_i$ and $W_j \subset Y_j$. Thus, we obtain that $W_i = Y_i$ and $W_j = Y_j$, whence $M=N$.
\end{proof}

\smallskip
\begin{lemma} \label{3.7}
Let  $A \in \mathcal A \big( \mathcal B (G) \big)$ be a divisor of $B$ in $\mathcal B (G)$. Then $A \in \mathcal A \big( \mathcal B_{\pm} (G) \big)$.
\end{lemma}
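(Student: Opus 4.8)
The plan is to first reduce the statement to the explicit description of $A$ provided by Lemma \ref{3.4}, and then to rule out nontrivial factorizations of $A$ in $\mathcal B_{\pm}(G)$ by invoking Lemma \ref{3.5}. I begin by noting that $A$ is a divisor of $B$ in $\mathcal F(G)$, so $A = B_M$ for a unique $M \subset X$ with $\chi_M \in V$. If $\text{\rm char}(R) = 2$, then $2(\chi_x + V) = 0$ for every $x \in X$, so every nonzero element of $G$ has order two; by the remark in Section \ref{2} this gives $\mathcal B(G) = \mathcal B_{\pm}(G)$, and the claim is immediate. Hence I may assume $\text{\rm char}(R) \ne 2$. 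Then Lemma \ref{3.4}.2 forces $M = p_i^{-1}(Y_i)$ for some $i \in [1,s]$ and some $Y_i \subset X_i$, and since $A$ is an atom of $\mathcal B(G)$, Lemma \ref{3.4}.1 yields $|Y_i| = 1$, say $M = p_i^{-1}(\{y\})$ with $y \in X_i$.

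Next I argue by contradiction. Suppose $A = B_M$ is not an atom of $\mathcal B_{\pm}(G)$; then there is a subset $\emptyset \ne N \subsetneq M$ with $B_N, B_{M \setminus N} \in \mathcal B_{\pm}(G)$. If $B_N \in \mathcal B(G)$, then $\sigma(B_{M \setminus N}) = \sigma(B_M) - \sigma(B_N) = 0$ forces $B_{M \setminus N} \in \mathcal B(G)$ as well, contradicting that $A$ is an atom of $\mathcal B(G)$. Therefore $B_N \in \mathcal B_{\pm}(G) \setminus \mathcal B(G)$. Since $B_{X \setminus N} = B_{M \setminus N} \, B_{X \setminus M}$ is a product of two elements of $\mathcal B_{\pm}(G)$, it again lies in $\mathcal B_{\pm}(G)$, so $B_N$ is a divisor of $B$ in $\mathcal B_{\pm}(G)$. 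Now Lemma \ref{3.5} applies and produces distinct $k, \ell \in [1,s]$ together with $\emptyset \ne W_k \subsetneq X_k$ and $\emptyset \ne W_{\ell} \subsetneq X_{\ell}$ such that $N = \Delta(p_k^{-1}(W_k), p_{\ell}^{-1}(W_{\ell}))$.

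The main obstacle is then the purely combinatorial fact that such a symmetric difference cannot be contained in $M = p_i^{-1}(\{y\})$, which I expect to establish by a short case distinction exhibiting a point of $N$ lying outside $M$. If $i \notin \{k, \ell\}$, the conditions defining $N$ constrain only the coordinates $k$ and $\ell$, so, using $|X_i| \ge 2$, one can choose $x \in p_k^{-1}(W_k) \setminus p_{\ell}^{-1}(W_{\ell}) \subset N$ with $x_i \ne y$, giving $x \in N \setminus M$. If, say, $i = k$, one first chooses a point with $x_i \notin W_i$ and $x_{\ell} \in W_{\ell}$ (possible as $W_i \subsetneq X_i$ and $W_{\ell} \ne \emptyset$); its membership in $N \subset M$ forces $X_i \setminus W_i = \{y\}$, i.e.\ $W_i = X_i \setminus \{y\}$. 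Then choosing a point with $x_i \in W_i$ and $x_{\ell} \notin W_{\ell}$ (possible as $W_i \ne \emptyset$ and $W_{\ell} \subsetneq X_{\ell}$) yields a point of $N$ whose $i$-th coordinate lies in $X_i \setminus \{y\}$ and hence differs from $y$, a contradiction; the case $i = \ell$ is symmetric. This contradiction shows that $A = B_M$ admits no nontrivial factorization in $\mathcal B_{\pm}(G)$, so $A \in \mathcal A(\mathcal B_{\pm}(G))$.
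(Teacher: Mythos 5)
Your proof is correct and follows essentially the same route as the paper: reduce via Lemma \ref{3.4} to $A = B_{p_i^{-1}(\{y\})}$, observe that any nontrivial $\mathcal B_{\pm}(G)$-divisor of $A$ must lie outside $\mathcal B(G)$ (and still divides $B$ in $\mathcal B_{\pm}(G)$), invoke Lemma \ref{3.5} to write it as a symmetric difference $\Delta(p_k^{-1}(W_k), p_{\ell}^{-1}(W_{\ell}))$, and derive a contradiction by exhibiting a point of that set outside $p_i^{-1}(\{y\})$, with the same case split on whether $i \in \{k,\ell\}$. Your explicit dispatch of the case $\text{\rm char}(R)=2$, where $\mathcal B(G)=\mathcal B_{\pm}(G)$ makes the claim trivial, is a small extra precision that the paper leaves implicit, and your final test-point computation is just the mirror image of the paper's (you derive $W_i = X_i\setminus\{y\}$ where the paper derives $W_i=\{y\}$).
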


\begin{proof}
By Lemma \ref{3.4}, there are $k \in [1,s]$ and $x_k \in X_k$ such that $A = \prod_{x \in p_k^{-1}(x_k)} p(\chi_x)$. Assume to the contrary that there is $C \in \mathcal B_{\pm} (G)$ with $1 \ne C \ne A$ such that $C \t A$ (in $\mathcal B_{\pm} (G)$). Since $A \in \mathcal A  \big( \mathcal B (G) \big)$, it follows that $C \notin \mathcal B (G)$. Thus, Lemma \ref{3.5} implies that there are $i, j \in [1,s]$ distinct and $\emptyset \ne Y_i \subsetneq X_i$, $\emptyset \ne Y_j \subsetneq X_j$ such that
\[
C = B_M \quad \text{with} \quad M = \Delta (p_i^{-1} (Y_i), p_j^{-1} (Y_j)) \,.
\]
This implies that
\[
\Delta (p_i^{-1} (Y_i), p_j^{-1} (Y_j)) \subset p_k^{-1} (x_k) \,.
\]
If $k \notin \{i, j\}$, then we choose $u \in X$ with $u_i \in Y_i$, $u_j \notin Y_j$, and $u_k \ne x_k$ (note that $|X_k| \ge 2$). This implies that
\[
u \in \Delta (p_i^{-1} (Y_i), p_j^{-1} (Y_j)) \quad \text{and} \quad u \notin p_k^{-1} (x_k) \,,
\]
a contradiction. Thus, we infer that $k \in \{i, j\}$, say $k=i$, and then
\[
\Delta (p_i^{-1} (Y_i), p_j^{-1} (Y_j)) \subset  p_i^{-1} (x_i) \,.
\]
Now we pick some $y_i \in Y_i$ and some $u \in X$ with $u_i = y_i$ and $u_j \notin Y_j$. Then $u \in p_i^{-1} (x_i)$, whence $y_i = u_i = x_i$ and thus $Y_i = \{x_i\}$. Now we choose some $u \in X$ with $u_i \ne x_i$ and $u_j \in Y_j$. Then
\[
u \in \Delta (p_i^{-1} (Y_i), p_j^{-1} (Y_j)) \quad \text{but} \quad u \notin p_i^{-1} (x_i) \,,
\]
a contradiction.
\end{proof}

\smallskip
\subsection{Proof of Proposition \ref{3.1}}

Let $R$ be a commutative ring, let $L \subset \N_{\ge 2}$ be a finite
 subset with $2 \in L$,  and let $f \colon L \to \N$  be a map with $s := \sum_{k \in L} f (k) \ge 3$. Furthermore, let all notation be as in \eqref{setting-1}. In particular, we have
\[
 L = \{|X_1|, \ldots, |X_s|\} \quad \text{and} \quad f(k) = \bigl|
\bigl\{ i \in [1,s] \colon |X_i| = k \bigr\} \bigr|
\quad\text{for every}\quad k \in L \,.
\]
and
\[
X = \prod^s_{j=1} X_j \quad\text{and}\quad X_J = \prod_{j \in J}X_j \quad\text{for}\quad
\emptyset \ne J \subset [1,s] \,.
\]
For every  $i \in [1,s]$, we have
\[
X = \biguplus_{y \in X_i} p_i^{-1}(y)\,, \quad \text{hence} \quad B = \prod_{y \in X_i} B_{p_i^{-1}(y)} \,.
\]
By Lemma \ref{3.4}, all $B_{p_i^{-1}(y)}$ are atoms of $\mathcal B (G)$ and by Lemma \ref{3.7} they are atoms of $\mathcal B_{\pm} (G)$. Thus,
\begin{equation} \label{factorizations-1}
Z_i = \prod_{y \in X_i} B_{p_i^{-1}(y)} \quad \text{is a factorization of $B$ in $\mathcal B (G)$}
\end{equation}
and
\[
Z_{\pm,i} = \prod_{y \in X_i} B_{p_i^{-1}(y)} \quad \text{is a factorization of $B$ in $\mathcal B_{\pm} (G)$} \,.
\]
This implies that
\[
\mathsf Z_{\mathcal B (G)} (B) \supset \{ Z_{1}, \ldots, Z_{s} \} \quad \text{and} \quad
\mathsf Z_{\mathcal B_{\pm} (G)} (B) \supset \{ Z_{\pm,1}, \ldots, Z_{\pm,s} \}
\]
We continue with the following assertion.

\smallskip
\noindent
{\bf A.} If there
exists some $Z\in \mathsf Z_{\mathcal B (G)} (B) \setminus \{Z_1, \ldots, Z_s\}$ or some
$Z' \in \mathsf Z_{\mathcal B_{\pm} (G)} (B) \setminus \{Z_{\pm, 1}, \ldots, Z_{\pm, s} \}$
then  $2 = |Z| = |Z'|$.

\smallskip
Suppose that {\bf A} holds. Since $2 \in L$ and since, for all $i \in [1,s]$, $|X_i| = |Z_i| = |Z_{\pm, i}|$, it follows that
\[
\mathsf L_{\mathcal B (G)} (B) = \{|X_1|, \ldots, |X_s|\} = L = \mathsf L_{\mathcal B_{\pm} (G)} (B)  \,.
\]
Furthermore, for all $k \in L \setminus \{2\}$, we have
\[
|\mathsf Z_{\mathcal B (G), k} (B)| = |\mathsf Z_{\mathcal B_{\pm} (G), k} (B)|  =   |\bigl\{ i \in [1,s]\, \bigm|\, |X_i|
= k \bigr\} \bigr| = f(k) \quad  \,.
\]
If   $k=2 = \min L$, then Lemma \ref{3.7} implies that $|\mathsf Z_{\mathcal B (G), k} (B)| \le |\mathsf Z_{\mathcal B_{\pm} (G), k} (B)|$ and {\bf A} implies that $|\mathsf Z_{\mathcal B (G), 2} (B)| \ge \bigl|
\bigl\{ i \in [1,s] \colon |X_i| = 2 \bigr\} \bigr| = f (2)$.

\medskip
\noindent
{\it Proof of {\bf A.}} We proceed in two steps.

\noindent
{\bf (i)}  First, we deal with $\mathcal B (G)$.

Let $Z\in \mathsf Z_{\mathcal B (G)} (B) \setminus \{Z_1, \ldots, Z_s\}$.
If $i, j \in [1,s]$ are distinct,   $y_i \in X_i$ and $y_j \in
X_j$, then $p_i^{-1}(y_i) \cap p_j^{-1}(y_j) \ne \emptyset$, and
since $B$ is squarefree, it follows that $B_{p_i^{-1}(y_i)}
B_{p_j^{-1}(y_j)} \nmid B$. Hence there exists some $A \in
\mathcal A \big( \mathcal B(G) \big) \setminus \{ B_{p_i^{-1}(y)} \mid i \in [1,s]\,, \ y
\in X_i \}$ with $A \t Z$. By Lemma \ref{3.4},  this can only hold for
 $\text{char}(R) =2$. We assert also that  $V
= A^{-1}B \in \mathcal A \big( \mathcal B(G) \big)$. If this holds, then we get  $|Z| = 2$, whence we are done.
Indeed, if $V \in \mathcal B(G) \setminus \mathcal A \big( \mathcal B(G) \big)$, then
Lemma \ref{3.4} implies that $V=B_{p_i^{-1}(Y_i)}$ for some $i \in
[1,s]$ and $Y_i \subset X_i$, and consequently $A = V^{-1}B =
B_{p_i^{-1}(X_i \setminus Y_i)}$. But, then Lemma \ref{3.4}  implies that
$|X_i \setminus Y_i| = 1$,  a contradiction to the choice of $A$.

\smallskip
\noindent
{\bf (ii)} Next, we deal with $\mathcal B_{\pm} (G)$, and we may assume that $\textrm{char} (R) \ne 2$.

Let $Z' = A_1 \cdot \ldots \cdot A_m \in \mathsf Z_{\mathcal B_{\pm} (G)} (B) \setminus \{Z_{\pm, 1}, \ldots, Z_{\pm, s} \}$. If $A_1, \ldots, A_m \in \mathcal B (G)$, then they are atoms in $\mathcal B (G)$ by Relation (\ref{2.1}),  whence $Z' \in \mathsf Z_{\mathcal B (G)} (B) \setminus \{Z_1, \ldots, Z_s\}$ and so $|Z'|=2$ by {\bf (i)}. Now suppose that there is $k \in [1,m]$ with $A_k \notin \mathcal B (G)$, say $k=1$. Then $A_2 \cdot \ldots \cdot A_m \in \mathcal B_{\pm} (G) \setminus \mathcal B (G)$. Then Lemmas \ref{3.5} and \ref{3.6} imply that $A_2 \cdot \ldots \cdot A_m \in \mathcal A \big(\mathcal B_{\pm} (G)  \big)$, whence $|Z'|=m=2$.
\qed

\smallskip
\section{On the construction of a group having a given $L$   as a set of lengths} \label{4}
\smallskip
	
The goal in this section is to prove the following result.

\smallskip
\begin{proposition} \label{4.1}
Let $R$ be a commutative ring, let $L \subset \N_{\ge 2}$ be a finite, nonempty subset,  and let $f \colon L \to \N$  be a map. Then there exist
a finitely generated free $R$-module $G$ and some $S \in \mathcal
B (G)$ with the following properties.
\begin{enumerate}
\item[(a)] $S$ is squarefree in $\mathcal F (G)$.

\item[(b)] $\mathsf L_{\mathcal B_{\pm} (G)} (S) = \mathsf L_{\mathcal B (G)} (S) = L$.

\item[(c)] $|\mathsf Z_{\mathcal B_{\pm} (G),k} (S)| \ge |\mathsf Z_{\mathcal B (G),k} (S)| \ge f (k)$ for all $k \in L$. Moreover, both inequalities are equalities for  $k > \min L$.
\end{enumerate}
\end{proposition}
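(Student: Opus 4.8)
The plan is to reduce Proposition~\ref{4.1} to the already-established Proposition~\ref{3.1} by a length-shifting argument. Write $m = \min L$ and set $n = m-2 \ge 0$. Then $L' = \{k-n \colon k \in L\} \subset \N_{\ge 2}$ satisfies $2 = \min L' \in L'$. I would define $f' \colon L' \to \N$ by $f'(k') = f(k'+n)$ for $k' > 2$ and $f'(2) = \max\{f(m), 3\}$; the value at $2$ is inflated only to guarantee $\sum_{k' \in L'} f'(k') \ge 3$, which costs nothing because property~(c) imposes merely a lower bound at the minimal length. Applying Proposition~\ref{3.1} to $(R, L', f')$ then yields a finitely generated free $R$-module $G_0$ and a squarefree $B \in \mathcal B(G_0)$ with $\mathsf L(B) = L'$ and the prescribed factorization counts.

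Next I would build a \emph{shifter} that raises every factorization length by exactly $n$. Adjoin a free $R$-module $F$ of rank $2n$ with basis $a_1, b_1, \ldots, a_n, b_n$, put $G = G_0 \oplus F$, and for each $i \in [1,n]$ set
\[
U_i = a_i \cdot b_i \cdot \big({-}(a_i+b_i)\big) \in \mathcal F(F), \qquad C = \prod_{i=1}^n U_i, \qquad S = BC \,.
\]
Since $a_i + b_i + \big({-}(a_i+b_i)\big) = 0$, each $U_i$ lies in $\mathcal B(F) \subset \mathcal B_{\pm}(F)$, and because $a_i, b_i$ are free basis elements the three terms of $U_i$ are pairwise distinct in every characteristic; hence $S$ is squarefree, settling~(a). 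A direct inspection of the $a_i, b_i$-coordinates shows that no proper, nonempty subsequence of $U_i$ is a (plus-minus weighted) zero-sum, so each $U_i$ is simultaneously an atom of $\mathcal B(F)$ and of $\mathcal B_{\pm}(F)$. The point of this uniform size-three atom is that it avoids the collision ${-}e = e$ that would spoil squarefreeness of the naive shifter $e_i \cdot ({-}e_i)$ when $\textrm{char}(R) = 2$.

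The heart of the reduction is a splitting lemma for the direct sum $G = G_0 \oplus F$. The terms of $B$ lie in $G_0 \oplus 0$ and those of $C$ in $0 \oplus F$, so for any divisor $A \t S$ the identity $\sigma(A) = \sigma(A_B) + \sigma(A_C)$, with $A_B \t B$ and $A_C \t C$, forces $\sigma(A_B) = 0$ and $\sigma(A_C) = 0$ separately; the same component-wise splitting holds for any choice of signs $\varepsilon_t \in \{-1,1\}$. Consequently every atom of $\mathcal B(G)$, resp.\ of $\mathcal B_{\pm}(G)$, dividing $S$ lies entirely over $G_0$ or entirely over $F$, the former being exactly the atoms dividing $B$ by divisor-closedness. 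A coordinate check as above shows the only (plus-minus weighted) zero-sum subsequences of $C$ are the subproducts $\prod_{i \in I} U_i$, so $C$ has a \emph{unique} factorization $z_C = U_1 \cdots U_n$ of length $n$ in both monoids. Hence
\[
\mathsf Z_{\mathcal B(G)}(S) = \mathsf Z_{\mathcal B(G_0)}(B) \times \{z_C\}, \qquad \mathsf Z_{\mathcal B_{\pm}(G)}(S) = \mathsf Z_{\mathcal B_{\pm}(G_0)}(B) \times \{z_C\} \,,
\]
giving $|\mathsf Z_{\mathcal B(G),k}(S)| = |\mathsf Z_{\mathcal B(G_0),k-n}(B)|$ and likewise with $\mathcal B_{\pm}$ throughout. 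Therefore $\mathsf L(S) = \mathsf L(B) + n = L' + n = L$, proving~(b), and translating Proposition~\ref{3.1}(c) through the shift $k \mapsto k-n$ yields~(c): for $k > \min L$ one has $k - n > 2 = \min L'$, so the counts are exact and equal to $f(k)$, while at $k = \min L$ the inflated lower bound $f'(2) \ge f(m)$ survives.

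The step I expect to be the main obstacle is producing a shifter that behaves identically in $\mathcal B$ and in $\mathcal B_{\pm}$ and remains squarefree for \emph{every} commutative ring $R$. When $\textrm{char}(R) = 2$ one has $\mathcal B(G) = \mathcal B_{\pm}(G)$, but the element ${-}e_i$ collapses onto $e_i$, so the length-two shifter is not squarefree; the size-three atom $a_i \cdot b_i \cdot ({-}(a_i+b_i))$ repairs this uniformly, at the cost of the small coordinate computation verifying that it stays irreducible and that $C$ factors uniquely in both monoids. The remaining bookkeeping—the inflation of $f'$ at the minimal length to meet the hypothesis $\sum f' \ge 3$ of Proposition~\ref{3.1}, and the degenerate case $n = 0$ where $C$ is empty and $S = B$—is routine.
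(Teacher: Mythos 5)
Your proposal is correct, and its core reduction coincides with the paper's: the paper's proof of Proposition \ref{4.1} also handles the case $2 \notin L$ (its CASE 7) by shifting lengths with exactly the three-term atoms $e_if_i(-e_i-f_i)$ that you call $a_ib_i(-(a_i+b_i))$, chosen for the same reason you give (the two-term shifter $e_i(-e_i)$ fails to be squarefree in characteristic $2$), and it invokes the same splitting of factorizations over the direct sum $G_1\oplus R^{2m}$ (which the paper states as ``clearly'' but you rightly justify by projecting a signed zero-sum onto the two summands). Where you genuinely diverge is in how the hypothesis $\sum_{k\in L}f(k)\ge 3$ of Proposition \ref{3.1} is met: the paper does not inflate $f$, but instead treats the cases $s=\sum_k f(k)\in\{1,2\}$ by five explicit ad hoc constructions (its CASES 1--5, split further by $\mathrm{char}(R)=2$ versus $\mathrm{char}(R)\ne 2$), whereas you replace $f(\min L)$ by $\max\{f(\min L),3\}$ and apply Proposition \ref{3.1} uniformly. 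Your trick is sound precisely because property (c) demands only a lower bound at $k=\min L$ and exact counts only for $k>\min L$, where your modified $f'$ agrees with the shifted $f$; indeed the paper's own CASES 3 and 5 already overshoot $f$ at the minimal length, so nothing is lost. The trade-off is that your route is shorter and avoids the case analysis on $\mathrm{char}(R)$, at the cost of delegating even the tiny instances (e.g.\ $L=\{2\}$, $f(2)=1$) to the full machinery of Section \ref{3}, where the paper exhibits concrete two-atom witnesses. Both arguments establish the proposition; yours is a legitimate streamlining.
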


\begin{proof}
We set $s = \sum_{k \in L} f (k)$ and distinguish several cases. Throughout, we use Relation \eqref{atom-inclusion} and Lemma \ref{2.1} without further mention.

\smallskip
\noindent
CASE 1: $s=1$ and $L = \{2\}$.

We set $G = R^4$ and choose  an $R$-basis $(e_1, e_2, f_1, f_2)$ of $G$. Then $U_1 = e_1f_1(-e_1-f_1)$ and $U_2 = e_2f_2(-e_2-f_2)$ are atoms of $\mathcal B (G)$ and of $\mathcal B_{\pm} (G)$. Thus, $S = U_1U_2 \in \mathcal B (G) \subset \mathcal B_{\pm} (G)$ is squarefree with $\mathsf L_{\mathcal B (G)} (S) = \mathsf L_{\mathcal B_{\pm} (G)} (S) = \{2\}$, and with $|\mathsf Z_{\mathcal B (G), 2} (S)| = |\mathsf Z_{\mathcal B_{\pm} (G), 2} (S)| = f(2) = 1$.

\smallskip
\noindent
CASE 2: $s=2$, $L = \{2\}$, and $\text{char} (R) \ne 2$.

We set $G = R^3$ and choose an $R$-basis $(e_1, e_2, e_3)$  of $G$. Then
\[
U_1 = (-e_1)e_1 \,, U_2 = e_2e_3(e_1-e_3)(-e_1-e_2) \,, U_3 = e_1e_2(-e_1-e_2) \,, \quad \text{and} \quad U_4 = (-e_1)e_3(e_1-e_3)
\]
are atoms of  $\mathcal B (G)$ and of $\mathcal B_{\pm} (G)$. Thus, $S = U_1U_2=U_3U_4 \in \mathcal B (G) \subset \mathcal B_{\pm} (G)$ is squarefree with $\mathsf L_{\mathcal B (G)} (S) = \mathsf L_{\mathcal B_{\pm} (G)}  (S) = \{2\}$, and with $|\mathsf Z_{\mathcal B (G), 2} (S)| = |\mathsf Z_{\mathcal B_{\pm} (G), 2} (S)| = f(2) = 2$.

\smallskip
\noindent
CASE 3: $s=2$, $L = \{2\}$, and $\text{char} (R) = 2$.

We set $G = R^4$ and choose an $R$-basis $(e_1, e_2, e_3, e_4)$ of $G$. We define $e_0 = e_1+e_2+e_3+e_4$ and consider
\[
\begin{aligned}
S & = \big( (e_1+e_2)(e_3+e_4)e_0\big) \big( e_1e_2e_3(e_0+e_4)\big) \\
  & = \big( e_1e_2(e_1+e_2)\big) \big( e_3(e_0+e_4)(e_3+e_4)e_0 \big) \\
  & = \big( (e_1+e_2)e_3(e_0+e_4) \big) \big( e_1e_2(e_3+e_4)e_0 \big) \,.
\end{aligned}
\]
Then $S \in \mathcal B (G) = \mathcal B_{\pm} (G)$ is squarefree, $\mathsf L_{\mathcal B (G)} (S) = \mathsf L_{\mathcal B_{\pm} (G)} (S)=2$, and $|\mathsf Z_{\mathcal B (G), 2} (S)| = |\mathsf Z_{\mathcal B_{\pm} (G), 2} (S)| = 3 > 2 = f (2)$.

\smallskip
\noindent
CASE 4: $s=2$, $L = \{2, r\}$ with $r \ge 3$, and $\text{char} (R) \ne 2$.

We set $G=R^{r-1}$ and choose an $R$-basis $(e_1, \ldots , e_{r-1})$  of $G$. Then
 $U = e_0e_1 \cdot \ldots
\cdot e_{r-1}$, with $e_0 = - (e_1+ \ldots + e_{r-1})$,  and $U_i = (-e_i)e_i$, with $i \in [0,r]$, are atoms of $\mathcal B (G)$ and of $\mathcal B_{\pm} (G)$. Thus,
\[
S = (-U)U = U_0 \cdot \ldots \cdot U_{r-1} \in \mathcal B (G) \subset \mathcal B_{\pm} (G)
\]
is squarefree,   $\mathsf  L_{\mathcal B (G)} (S) = \mathsf  L_{\mathcal B_{\pm} (G)} (S) = \{2,r\}$ and $|\mathsf Z_{\mathcal B (G), k} (S)| = |\mathsf Z_{\mathcal B_{\pm} (G), k} (S)| = f (k) = 1$ for every $k \in L$.

\smallskip
\noindent
CASE 5:  $s=2$,  $L = \{2,\,r\}$ with  $r \ge 3$, and $\text{char}(R) = 2$.

We set $G = R^{2r-1}$ and choose an $R$-basis $(e_1, \ldots, e_{2r-1})$  of $G$. With $e_0 = e_1+ \ldots + e_{2r-2}$, the sequences
\[
\begin{aligned}
U_1 & = e_1 \cdot \ldots \cdot e_{2r-1}(e_0+e_{2r-1}) , \quad U_2 = (e_1+e_2)(e_3+e_4) \cdot \ldots \cdot (e_{2r-3}+e_{2r-2})e_0 \,, \\
V_1 & = e_1 \cdot \ldots \cdot e_{2r-2}e_0 , \quad \text{and} \quad V_2 = (e_1+e_2)(e_3+e_4) \cdot \ldots \cdot (e_{2r-3}+e_{2r-2})e_{2r-1}(e_{2r-1}+e_0)
\end{aligned}
\]
are in  $\mathcal B (G) = \mathcal B_{\pm} (G)$. We define
\[
S = U_1U_2=V_1V_2 = \big(e_1e_2(e_1+e_2)\big) \cdot \ldots \cdot \big( e_{2r-3}e_{2r-2}(e_{2r-3}+e_{2r-2})\big)\big(e_{2r-1}e_0(e_{2r-1}+e_0)\big) \,.
\]
By construction, $S \in \mathcal B (G) = \mathcal B_{\pm} (G)$ is squarefree and, obviously, $\mathsf L_{\mathcal B (G)} (S) = \mathsf L_{\mathcal B_{\pm} (G)} (S) = \{2, r\}$, $|\mathsf Z_{\mathcal B (G), 2} (S)| =  |\mathsf Z_{\mathcal B_{\pm} (G), 2} (S)| = 2 > 1 = f (2)$, and $|\mathsf Z_{\mathcal B (G), r} (S)|= |\mathsf Z_{\mathcal B_{\pm} (G), r} (S)|=1=f (r)$.

\smallskip
\noindent
CASE 6:  $s \ge 3$  and  $2 \in L$.

This follows from Proposition \ref{3.1}.

\smallskip
\noindent
CASE 7:    $2 \notin L$.

We set $m = -2+ \min L$,   $L_0 = -m + L \subset \N_{\ge 2}$, and we define
\[
f_0 \colon L_0 \to \N \quad \text{ by} \quad  f_0 (k) = f (k+m) \quad \text{for
every $k \in L_0$} \,.
\]
Since $2 \in L_0$, the previous cases imply that there exist a finitely generated free $R$-module $G_1$ and
some $S_1 \in \mathcal B (G_1) \subset \mathcal B_{\pm} (G_1)$ satisfying Properties (a), (b), and (c). In particular, $\mathsf L_{\mathcal B (G)} (S_1) = \mathsf L_{\mathcal B_{\pm} (G)} (S_1) = L_0$.

Now we set $G =  G_1 \oplus R^{2m}$ and we choose  an $R$-basis $(e_1,\,f_1, \ldots, e_m,\,
f_m)$ of $R^{2m}$. Then
\[
S_2 = \prod_{j=1}^m \,\big( e_if_i(-e_i-f_i) \big) \ \in \ \mathcal B(R^{2m}) \subset \mathcal B_{\pm} (R^{2m})
\]
has unique factorization in $\mathcal B (R^{2m})$ and in $\mathcal B_{\pm} (R^{2m})$, say  $\mathsf  Z_{\mathcal B (R^{2m})} (S_2) = \mathsf  Z_{\mathcal B_{\pm} (R^{2m})} (S_2) = \{y\}$ with $|y|=m$. Now we consider the sequence
\[
S = S_1S_2 \in  \mathcal B (G_1) \time \mathcal B(R^{2m})   \subset \mathcal B (G) \subset \mathcal
B_{\pm} (G)\,.
\]
Clearly, $S$ is squarefree in $\mathcal F (G)$, $\mathsf  Z_{\mathcal B (G)} (S) = \{ yz \colon z \in \mathsf Z_{\mathcal B (G_1)} (S_1)\} $ and  $\mathsf  Z_{\mathcal B_{\pm} (G)} (S) = \{ yz \colon z \in \mathsf Z_{\mathcal B_{\pm} (G_1)} (S_1)\} $, which implies that
\[
\mathsf  L_{\mathcal B_{\pm} (G)} (S) = |y| + \mathsf L_{\mathcal B_{\pm} (G_1)} (S_1) = m+L_0
= L = \mathsf  L_{\mathcal B (G)} (S)
\]
and, for every $k \in L$,
\[
|\mathsf Z_{\mathcal B_{\pm} (G), k} (S)| = |\mathsf Z_{\mathcal B_{\pm} (G_1), k-m}(S_1)| \quad \text{and} \quad
|\mathsf Z_{\mathcal B (G), k} (S)| = |\mathsf Z_{\mathcal B (G_1), k-m}(S_1)|   \,,
\]
Thus, $S \in \mathcal B (G)$ satisfies Properties (a), (b), and (c).
\end{proof}

\smallskip
\section{Proof of Theorem \ref{1.1} and of Corollary \ref{1.2}} \label{5}
\smallskip

For an abelian group $G$, we denote by $\mathsf T (G)$ its torsion subgroup.

\smallskip
\begin{proposition} \label{5.1}
Let $G$ be a finitely generated abelian group of torsionfree rank $r \in \N$ and let $S \in \mathcal B_{\pm} (G)$.
Then there are $N \in \N$ and homomorphisms
\[
\varphi_0 \colon   G \to G_0 := \Z  \times \mathsf T (G) , \qquad \varphi_n
\colon G \to G_n := (\Z/n\Z)^r \times \mathsf T (G)  \quad \text{for all} \ n
\ge N\,,
\]
such that the following properties hold for all $n \in \{0\} \cup \N_{\ge N}$.

\begin{enumerate}
\smallskip
\item If  $S$ is squarefree in $\mathcal F (G)$, then $\varphi_n (S)$ is squarefree in $\mathcal F (G_n)$.

\smallskip
\item The map $\varphi_n$ induces a bijective map  $\overline \varphi_n \colon \mathsf Z_{\mathcal B_{\pm} (G)} (S) \to \mathsf Z_{\mathcal B_{\pm} (G_n)} ( \varphi_n (S) )$ such that the following three properties hold.
    \begin{enumerate}
    \item $|z| = |\overline \varphi_n (z)|$ for all $z \in \mathsf Z_{\mathcal B_{\pm} (G)} (S)$.

    \item $\mathsf L_{\mathcal B_{\pm} (G)} (S) = \mathsf L_{\mathcal B_{\pm} (G_n)} ( \varphi_n (S))$.

    \item  $\bigl| \bigl\{ z \in \mathsf Z_{\mathcal B_{\pm} (G)} (S) \colon |z| = k \bigr\} \bigr|
           = \bigl| \bigl\{ z \in \mathsf Z_{\mathcal B_{\pm} (G_n)} ( \varphi_n (S)) \colon |z| = k \bigr\} \bigr|$
              for every $k \in \mathsf L_{\mathcal B_{\pm} (G)} (S)$.
    \end{enumerate}

\smallskip
\item If $S \in \mathcal B (G)$, then     $\varphi_n$ induces a bijective map  $\varphi_n' \colon \mathsf Z_{\mathcal B (G)} (S) \to \mathsf Z_{\mathcal B (G_n)} ( \varphi_n (S) )$ such that the \newline Properties (a) - (c) of Part 2 hold for $\mathcal B (G)$ and $\mathcal B (G_n)$.
\end{enumerate}
\end{proposition}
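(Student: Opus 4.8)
The plan is to reduce the whole proposition to a single finiteness observation: membership of a divisor of $S$ in $\mathcal B_{\pm} (G)$ or in $\mathcal B (G)$, together with distinctness of terms, is controlled by a finite set $\mathcal D$ of nonzero elements of $G$, and any homomorphism that does not annihilate $\mathcal D$ preserves all of this structure at once.

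First I would fix a splitting $G = \Z^r \times \mathsf T (G)$ and assemble $\mathcal D$. For a divisor $T = h_1 \cdot \ldots \cdot h_k$ of $S$ in $\mathcal F (G)$ and signs $\varepsilon \in \{-1,1\}^k$, call $\varepsilon_1 h_1 + \ldots + \varepsilon_k h_k$ a signed sum of $T$; let $\mathcal D \subset G \setminus \{0\}$ be the set of all nonzero signed sums of all divisors $T \t S$. This is finite, and it already contains all nonzero differences $g - g'$ of distinct $g, g' \in \supp (S)$ (signed sums of the two-term divisor $g \cdot g'$). Its relevance is this: since a homomorphism $\varphi$ sends the signed sums of $T$ exactly onto the signed sums of $\varphi (T)$, and $T \in \mathcal B_{\pm} (G)$ (resp.\ $T \in \mathcal B (G)$) means that some signed sum (resp.\ the plain sum $\sigma (T)$) vanishes, any $\varphi$ with $\varphi (\mathcal D) \subset G' \setminus \{0\}$ satisfies
\[
T \in \mathcal B_{\pm} (G) \iff \varphi (T) \in \mathcal B_{\pm} (G'), \qquad T \in \mathcal B (G) \iff \varphi (T) \in \mathcal B (G')
\]
for every divisor $T \t S$, and is injective on $\supp (S)$.

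Next I would build the homomorphisms so that $\mathcal D$ is avoided. Writing $d = (d', d'') \in \Z^r \times \mathsf T (G)$, choose $N$ strictly larger than every coordinate of every $d'$ with $d \in \mathcal D$, and for $n \ge N$ let $\varphi_n$ be reduction modulo $n$ on the free part and the identity on $\mathsf T (G)$; then $\varphi_n (d) = 0$ forces $d'' = 0$ and $n \t d'$, hence $d = 0$, so $\varphi_n (\mathcal D) \subset G_n \setminus \{0\}$. For $\varphi_0$ take $\varphi_0 = (\ell, \id)$, where $\ell \colon \Z^r \to \Z$ is a group homomorphism with $\ell (d') \ne 0$ for each $d = (d', 0) \in \mathcal D$; finitely many such nonvanishing conditions can be met simultaneously, since the excluded functionals form finitely many hyperplanes in the lattice $\Hom (\Z^r, \Z)$. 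Again $\varphi_0 (\mathcal D) \subset G_0 \setminus \{0\}$.

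It then remains to cash in the two equivalences. Part 1 is immediate from injectivity on $\supp (S)$. For Part 2, this same injectivity identifies the divisor poset of $S$ in $\mathcal F (G)$ with that of $\varphi_n (S)$ in $\mathcal F (G_n)$, and the $\mathcal B_{\pm}$-equivalence upgrades it to an isomorphism of the posets of $\mathcal B_{\pm}$-divisors, which carries atoms to atoms and hence induces a length-preserving bijection $\overline \varphi_n$ on factorizations; Properties (a)--(c) follow at once. Part 3 is the identical argument with $\mathcal B$ and the plain sums (a special case already recorded in $\mathcal D$) replacing $\mathcal B_{\pm}$ and signed sums. The step I expect to require the most care is the passage from divisors to atoms and factorizations: to see that $\varphi_n (A)$ is an atom for an atom $A \t S$, I would note that a nontrivial splitting $\varphi_n (A) = C_1 C_2$ in $\mathcal B_{\pm} (G_n)$ pulls back along the support-injectivity to $A = T_1 T_2$ with $\varphi_n (T_i) = C_i$, whereupon the equivalence forces $T_1, T_2 \in \mathcal B_{\pm} (G)$ nontrivial, contradicting irreducibility of $A$; the reverse direction, that every factorization of $\varphi_n (S)$ is the image of one of $S$, is symmetric.
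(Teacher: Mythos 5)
Your proof is correct and follows essentially the same route as the paper: both reduce the proposition to choosing homomorphisms that avoid the finite set of nonzero signed subset sums of $S$ (the paper phrases this as injectivity on the set $\Sigma_{\pm}(S)$ of all signed subset sums), and both realize this by reduction modulo a sufficiently large $n$ together with a linear functional $\Z^r \to \Z$ lying outside finitely many hyperplanes, i.e.\ a non-root of a product of linear forms. The only nit is that your $N$ must exceed the \emph{absolute values} of the coordinates of the $d'$ (they may be negative); otherwise you merely spell out the transfer of divisors, atoms, and factorizations more explicitly than the paper, which leaves that step implicit.
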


\begin{proof}
We may suppose that $G = \Z^r \times \mathsf T (G)$ and let $\mathsf p \colon G \to \Z^r$ denote the projection.
We set $S = g_1 \cdot\ldots \cdot g_{\ell}$, where $\ell \in \N_0$
and $g_1, \ldots, g_{\ell} \in G$. It  suffices to show that there
are $N \in \N$ and   homomorphisms $\varphi_n \colon G \to (\Z/n\Z)^r \times \mathsf T (G)$, for every $n \in \{0\} \cup \N_{\ge N}$,  which are injective on the set
\[
\Sigma (S) = \Bigl \{ \,\sum_{\nu \in I}  g_{\nu}\; \colon \;
\emptyset \ne I \subset [1,\ell] \,,  \ \text{for all} \ \nu \in I \Bigr\} \,,
\]
which settles Part 3., and on the set
\[
\Sigma_{\pm} (S) = \Bigl \{ \,\sum_{\nu \in I} \varepsilon_{\nu} g_{\nu}\; \colon \;
\emptyset \ne I \subset [1,\ell] \,, \varepsilon_{\nu} \in \{-1,1\} \ \text{for all} \ \nu \in I \Bigr\} \,,
\]
which settles Part 2. (in case when $S \in \mathcal B (G)$).
Clearly, $\Sigma (S) \subset \Sigma_{\pm} (S)$ and the set
\[
E = \big\{ \boldsymbol m - \boldsymbol n \colon \boldsymbol m, \,
\boldsymbol n \in \mathsf p (\Sigma_{\pm} (S)), \ \boldsymbol m \ne
\boldsymbol n \big\}  \subset\Z^r \setminus \{ \boldsymbol 0 \}
\]
is finite, say $E= \{ \boldsymbol n^{(1)}, \ldots, \boldsymbol n^{(d)} \}$. We assert that there is a homomorphism $\psi \colon \Z^r \to \Z$ such that $\psi (\boldsymbol n^{(\nu)} ) \ne 0$ for all $\nu \in [1,d]$. If this holds, then we set $\varphi_0 \colon G \to G_0$ by $\varphi_0 (\boldsymbol u, g) =(\psi ( \boldsymbol u ), g)$, and obviously $\varphi_0$ has the required property.

For $j \in [1,d]$, let $\boldsymbol n^{(j)} = (n_1^{(j)}, \ldots, n_r^{(j)})$, and consider the non-zero polynomial
\[
f = \prod_{j =1}^d \Bigl( \sum_{i=1}^r n_i^{(j)} X_i \Bigr) \in \Z[X_1, \ldots, X_r] \,.
\]
If $(a_1, \ldots, a_r) \in \Z^r$ is such that $f(a_1, \ldots, a_r)\ne 0$, then $\psi \colon \Z^r \to \Z$, defined by $\psi (x_1, \ldots, x_r) = a_1x_1 + \ldots + a_rx_r$, is a non-zero homomorphism satisfying $\psi (\boldsymbol n^{(j)} ) \ne 0$ for all $j \in [1,d]$.

\smallskip

For $n \in \N$, let $\psi_n \colon \Z^r \to (\Z/n\Z)^r$ be the
canonical epimorphism. Then there exists some $N \in \N$ such
that $\psi_n \t \mathsf p (\Sigma_{\pm} (S))$ is injective for all $n \ge N$
and the homomorphism $\varphi_n = \psi_n \times \id_{\mathsf T (G)} \colon G \to G_n$ has the required property.
\end{proof}

\smallskip
We recall the definition of almost arithmetic multiprogressions (AAMPs) and of the set of minimal distances of a monoid. To begin with AAMPs, let   $d\in\N$, $M\in\N_0$ and $\{0,d\}\subset\mathcal D
\subset [0,d]$. Then $L$ is called an {\it almost
arithmetic multiprogression}\ ({\rm AAMP}\ for
short) with {\it difference} $d$, {\it period}\ $\mathcal D$,
 and {\it bound} $M$, if
\[
L = y + (L'\cup L^*\cup L'')\,\subset\,y +\mathcal D + d\Z
\]
where
\begin{itemize}
\item $L^*$ is finite and nonempty with $\min L^* = 0$ and $L^* =(\mathcal D + d\Z)\cap [0,\max L^*]$,
\item $L'\subset [-M,-1]$ and $L''\subset\max L^* + [1,M]$, which are called the {\it initial part} and the {\it end part} of $L$, and
\item $y\in\Z$.
\end{itemize}
In particular,  AAMPs are finite and nonempty subsets of the integers. For an overview of monoids and domains, whose sets of lengths are AAMPs with global bounds on all parameters, we refer to \cite[Chapter 4]{Ge-HK06a}. Schmid proved that for Krull monoids with finite class group the description of sets of lengths as AAMPs is best possible (\cite{Sc09a}).

\smallskip
Next we recall the concept of minimal distances.
For a finite, nonempty subset $L = \{m_0, \ldots, m_k \} \subset \Z$ with $m_0 < \ldots < m_k$, let $\Delta (L) = \{m_i - m_{i-1} \colon i \in [1, k] \} \subset \N$ denote its set of distances. Now let $H$ be a \BF-monoid. Then every divisor-closed submonoid $S\subset H$ is a BF-monoid and
\[
\Delta (S) = \bigcup_{a \in S} \Delta \big( \mathsf L_S (a) \big) \subset \N
\]
denotes the {\it set of distances} of $S$. Then
\[
\Delta^* (H) = \{ \min \Delta (S) \colon S \subset H \ \text{is a divisor-closed submonoid with $\Delta (S) \ne \emptyset$} \} \subset \Delta (H)
\]
is the {\it set of minimal distances} of $H$.  In the next lemma, we gather some simple properties, which highlight the differences between $\Delta^* \big( \mathcal B (G) \big)$ and $\Delta^* \big( \mathcal B_{\pm} (G) \big)$ and which shows how different $\Delta^* \big( \mathcal B_{\pm} (G) \big)$ can be for different groups. Let
\[
\mathsf D \big( \mathcal B_{\pm} (G) \big) = \max \big\{ |S| \colon S \in \mathcal A \big( \mathcal B_{\pm} (G) \big) \big\}
\]
denote the {\it plus-minus weighted Davenport constant} of $G$, which is studied, among others, in \cite{Ma-Or-Sc14a,Ma-Or-Sa-Sc15,HK14b}.

\smallskip
\begin{proposition} \label{5.2}
Let $G$ be an abelian group.
\begin{enumerate}
\item If $G$ is finite with $|G|>2$, exponent $n \ge 2$, and rank $r \ge 1$, then  $\max \Delta^* \big( \mathcal B (G) \big) = \max \{r-1, n-2\}$ and $\max \Delta^* \big( \mathcal B_{\pm} (G) \big) \le \mathsf D \big( \mathcal B_{\pm} (G) \big)-2$.

\item If $G$ is infinite, then $\Delta^* \big( \mathcal B (G) \big) = \N$.

\item If $G$ is a finite elementary $2$-group of rank $r$, then $\Delta^* \big( \mathcal B (G) \big) = \Delta^* \big( \mathcal B_{\pm} (G) \big) = [1, r-1]$.

\item If $G$ is an elementary $3$-group (finite or infinite), then $\Delta^* \big( \mathcal B_{\pm} (G) \big) = \{1\}$.
\end{enumerate}
\end{proposition}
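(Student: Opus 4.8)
My plan is to treat the four parts essentially independently, reducing (1)--(3) to results already available and concentrating the real work on (4).

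\smallskip
\noindent\textbf{Parts (1)--(3).} For Part (1), the equality $\max \Delta^* \big( \mathcal B (G) \big) = \max \{r-1, n-2\}$ is the known description of the set of minimal distances of a Krull monoid with finite class group, which I would simply cite (see \cite{Ge-HK06a} and the references given there). For the second assertion I would use that $\Delta^* (H) \subseteq \Delta (H)$ for every \BF-monoid together with the standard bound $\Delta \big( \mathcal B_{\pm} (G) \big) \subseteq \big[ 1, \mathsf D \big( \mathcal B_{\pm} (G) \big) - 2 \big]$, expressing that the maximal distance is at most two less than the maximal length of an atom (here $\mathsf D \big( \mathcal B_{\pm} (G) \big)$, which is finite because $\mathcal B_{\pm} (G)$ is finitely generated for finite $G$). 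Part (3) is then almost immediate: if $G$ is an elementary $2$-group, then $\mathcal B_{\pm} (G) = \mathcal B (G)$ by the remark in Section \ref{2}, so the two monoids and hence their sets of minimal distances coincide, and $\Delta^* \big( \mathcal B (G) \big) = [1, r-1]$ is the $n = 2$ case of the computation cited for Part (1). Likewise, Part (2) is the known fact that $\Delta^* \big( \mathcal B (G) \big) = \N$ for infinite $G$, which I would cite.

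\smallskip
\noindent\textbf{Part (4), the easy inclusion.} To see $1 \in \Delta^* \big( \mathcal B_{\pm} (G) \big)$, fix $g \in G$ with $\ord (g) = 3$ and put $S = \mathcal B_{\pm} (\langle g \rangle)$, a divisor-closed submonoid of $\mathcal B_{\pm} (G)$. Because $-g = 2g$, both $g^2$ (with signs $+,-$) and $g^3$ lie in $\mathcal B_{\pm} (\langle g \rangle)$, and each is an atom: the only proper nonempty subsequence of $g^2$ is $g \notin \mathcal B_{\pm} (\langle g \rangle)$, and $g^3$ admits no nontrivial factorization since removing a $\pm$-zero-sum subsequence $g^2$ leaves $g \notin \mathcal B_{\pm} (\langle g \rangle)$. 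From $g^6 = (g^2)^3 = (g^3)^2$ we get $\{2,3\} \subseteq \mathsf L_S (g^6)$, whence $1 = 3-2 \in \Delta (S)$ and $\min \Delta (S) = 1$.

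\smallskip
\noindent\textbf{Part (4), the hard inclusion, and the main obstacle.} It remains to prove $\Delta^* \big( \mathcal B_{\pm} (G) \big) \subseteq \{1\}$, for which it suffices to show $\max \Delta \big( \mathcal B_{\pm} (G) \big) \le 1$, i.e.\ that every set of lengths in $\mathcal B_{\pm} (G)$ is an interval of consecutive integers. Any fixed $B$ is supported on finitely many elements, so it lies in $\mathcal B_{\pm} (G_0)$ for a finitely generated $G_0 \cong (\Z/3\Z)^r$; I would therefore argue for $(\Z/3\Z)^r$ and keep all bounds uniform in $r$, which settles finite and infinite $G$ simultaneously. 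The plan is to build a system of exchange moves on factorizations that change the length by exactly one: first classify the atoms of $\mathcal B_{\pm} \big( (\Z/3\Z)^r \big)$ via the sign-splitting $B = B^+ B^-$ with $\sigma (B^+) = \sigma (B^-)$ as in the proof of Lemma \ref{2.1}, and then show that any factorization that is not of maximal length can be turned into one of length increased by $1$ by locally refactoring a bounded sub-product of its atoms, using repeatedly the relation $-h = 2h$ (which is exactly what makes $h^2$ and $h^3$ simultaneously $\pm$-zero-sum and forces the length-defect $1$). The hard part will be this exchange argument. The difficulty is that $\mathcal B_{\pm} (G)$ is not Krull, and in fact not saturated in $\mathcal F (G)$ (by the characterization recalled in Section \ref{2}, $\mathcal B_{\pm} (G)$ is Krull only for elementary $2$-groups), so one may not cancel atoms or divide out a sub-sequence and stay inside $\mathcal B_{\pm} (G)$: the sign assignments must be carried along at every step. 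The technical heart is thus to produce, for each gap in a set of lengths, an explicit length-one rebalancing that respects the $\pm$-constraints, and to verify that the elementary $3$-structure always makes such a move available.
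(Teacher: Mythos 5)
Parts (1)--(3) of your plan coincide with the paper's treatment, which disposes of them by citation; two remarks, though. The equality $\max \Delta^* \big( \mathcal B (G) \big) = \max \{r-1, n-2\}$ is a theorem of \cite{Ge-Zh16a}, not something available in \cite{Ge-HK06a} (which only yields the lower bound and special cases). More importantly, the inequality $\max \Delta^* \big( \mathcal B_{\pm} (G) \big) \le \mathsf D \big( \mathcal B_{\pm} (G) \big)-2$ is \emph{not} the ``standard bound'': the usual proof that $\max \Delta \big( \mathcal B (G) \big) \le \mathsf D (G)-2$ rests on $\mathcal B (G)$ being saturated in $\mathcal F (G)$ (the Krull property), which fails for $\mathcal B_{\pm} (G)$ unless $G$ is an elementary $2$-group; one cannot divide an atom of one factorization out of a subproduct of another and stay in $\mathcal B_{\pm} (G)$. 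The paper cites \cite{Me-Or-Sc25a} for precisely this assertion, so your appeal to a standard fact is a gap here, albeit a citable one.

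The serious problem is Part (4). You have misread the definition of $\Delta^*$: its elements are the \emph{minima} $\min \Delta (S)$ over divisor-closed submonoids $S$, so $\Delta^* \big( \mathcal B_{\pm} (G) \big) \subset \{1\}$ requires only that $\min \Delta (S) = 1$ for every divisor-closed $S$ with $\Delta (S) \ne \emptyset$ --- not that $\max \Delta \big( \mathcal B_{\pm} (G) \big) \le 1$. The statement you set out to prove instead is false: by Theorem \ref{1.1}.2, for an infinite elementary $3$-group $G$ every finite nonempty subset of $\N_{\ge 2}$, for instance $\{2,k\}$ with $k$ arbitrary, occurs as a set of lengths in $\mathcal B_{\pm} (G)$, so $\Delta \big( \mathcal B_{\pm} (G) \big)$ is unbounded (and, passing to the finite subgroup generated by the support of the realizing sequence, $\max \Delta \big( \mathcal B_{\pm} (C_3^r) \big)$ is unbounded in $r$ as well); the paper makes exactly this point in the discussion preceding the proof of Theorem \ref{1.1}. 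Hence your proposed ``exchange move'' programme cannot succeed, quite apart from being only sketched. The repair is cheap and is already contained in your ``easy inclusion'': every divisor-closed submonoid of $\mathcal B_{\pm} (G)$ is of the form $\mathcal B_{\pm} (G_0)$ for some $G_0 \subset G$; if its set of distances is nonempty, then $G_0$ contains some $g \ne 0$, necessarily of order $3$, and your own computation that $g^2$ and $g^3$ are atoms with $(g^2)^3 = (g^3)^2$ yields $1 \in \Delta \big( \mathcal B_{\pm} (G_0) \big)$, whence $\min \Delta \big( \mathcal B_{\pm} (G_0) \big) = 1$. That observation, applied to an arbitrary divisor-closed submonoid rather than a single one, is the paper's entire proof of Part (4).
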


\begin{proof}
1. For the first statement see \cite{Ge-Zh16a},  and for the second see \cite{Me-Or-Sc25a}.

2.  See \cite[Theorem 1.1]{Ch-Sc-Sm08b}.

3. Since $\mathcal B (G) = \mathcal B_{\pm} (G)$, we have $\Delta^* \big( \mathcal B (G) \big) = \Delta^* \big( \mathcal B_{\pm} (G) \big)$. The equality $\Delta^* \big( \mathcal B (G) \big)  = [1, r-1]$ follows from \cite[Corollary 6.8.3]{Ge-HK06a}.

4. Let $S \subset \mathcal B_{\pm} (G)$ be a divisor-closed submonoid with $\Delta (S) \ne \emptyset$. Then there is a nonempty subset $G_0 \subset G$ with $\mathcal B_{\pm} (G_0) = S$. Let $g \in G_0$ be a nonzero element. Then $\ord (g) = 3$, $A = g^3$ and $U = g^2$ are atoms in $\mathcal B_{\pm} (G_0)$. Since $A^2 = U^3$, it follows that $1 \in \Delta (S)$, whence $\min \Delta (S)=1$.
\end{proof}

We refer to  \cite{Ge-Zh16a, Zh18a,Pl-Sc19a} for  recent progress on $\Delta^* \big( \mathcal B (G) \big)$. For the monoid $\mathcal B_{\pm} (G)$, its set of minimal distances was recently studied by Schmid et al. \cite{Me-Or-Sc25a}, and it turned out, as already indicated by Proposition \ref{5.2},  that the structure of $\Delta^* \big( \mathcal B_{\pm} (G) \big)$ is in general quite different from the structure of $\Delta^* \big( \mathcal B (G) \big)$.

\smallskip
Before completing the proofs of Theorem \ref{1.1} and of Corollary \ref{1.2}, let us compare the statements of Part 1 and of Part 2 of Theorem \ref{1.1} for elementary $3$-groups. Let $G$ be an elementary $3$-group. If $G \cong C_3^r$ with $r \in \N$, then, apart from a globally bounded initial and end part,  all sets of lengths in $\mathcal B_{\pm} (G)$ are intervals by Theorem \ref{1.1}.1 and by  Proposition \ref{5.2}.4. Nevertheless, if $G$ is infinite, then every finite, nonempty subset of $\N_{\ge 2}$ occurs as a set of lengths. This shows that the bounds $M (C_3^r)$, occurring in Theorem \ref{1.1}.1, tend to infinity as $r$ tends to infinity.

\smallskip
An abelian group $G$ is said to be {\it bounded} if there is $m \in \N$ such that $mG= \boldsymbol 0$.

\smallskip
\begin{proof}[Proof of Theorem \ref{1.1}]
Let $G$ be an abelian group.

1. By \cite[Theorem 3.7]{F-G-R-Z24a}, $\mathcal B_{\pm} (G)$ is finitely generated if and only if $G$ is finite. Thus, if $G$ is finite, then Theorem 4.4.11 in \cite{Ge-HK06a} implies that there is some $M \in \N_0$ such that, for every plus-minus weighted zero-sum sequence $S$ over $G$, its set of lengths $\mathsf L_{\mathcal B_{\pm} (G)} (S)$ is an AAMP with difference in $\Delta^* \big( \mathcal B_{\pm} (G) \big)$ and bound $M$.

2. Now suppose that $G$ is infinite. Let $L \subset \N_{\ge 2}$ be a finite, nonempty subset and let $f \colon L \to \N$ be a map. We show that there is an abelian group $G_1$, which is isomorphic to a subgroup of $G_2 \subset G$, and some $S \in \mathcal B (G_1)$ having the required properties. If this holds, then the assertion follows because $\mathcal B_{\pm} (G_2)$ is a divisor-closed submonoid of $\mathcal B_{\pm} (G)$.

We distinguish three cases.

\smallskip
\noindent
CASE 1:  $G$ is not a torsion group.

Then $G$ has a subgroup isomorphic to $\Z$. By Proposition
\ref{4.1}, there exist some $r \in \N$ and some $S' \in \mathcal
B (\Z^r)$ such that $S'$ satisfies the Properties (a), (b), and (c) in Proposition \ref{4.1}.
By Proposition \ref{5.1},  there exists some $S
\in \mathcal B ( \Z)$ satisfying the same Properties (a), (b), and (c).

\smallskip
\noindent
CASE 2:  $G$ is an unbounded torsion group.

By CASE 1, there exists some $S' \in \mathcal B (\Z)$ satisfying the Properties (a), (b), and (c) in Proposition \ref{4.1}. By Proposition \ref{5.1},  there exists some $N \in
\N$ such that for every $n \ge N$ there is some $S_n \in \mathcal
B_{\pm} (\Z/ n \Z)$ with the required properties.
Since $G$ contains a cyclic subgroup of order $n$ for some $n \ge N$, the assertion follows.

\smallskip
\noindent
CASE 3:  $G$ is bounded.

By \cite[Chapter 4]{Ro96}, $G$ is a direct sum of cyclic groups.
Hence  there is some $n \in \N_{\ge 2}$ such that $G$ contains a
subgroup isomorphic to $(\Z/n\Z)^{(\N)}$.  In particular, for every $r \in \N$,  $G$
contains a subgroup isomorphic to $(\Z/n\Z)^r$, whence the assertion
follows by Proposition \ref{4.1}.
\end{proof}

\smallskip
\begin{proof}[Proof of the Corollary \ref{1.2}]
Let $L \subset \N_{\ge 2}$ be a finite, nonempty subset. For a prime $p$ and $r \in \N$, let $C_p^r$ denote an elementary $p$-group of rank $r$. We prove the following two claims.
\begin{itemize}
\item[{\bf C1}] There is $N \in \N$ such that $L \in \mathcal L \big( \mathcal B_{\pm} (\Z/n\Z) \big)$ for all $n \ge N$.

\item[{\bf C2}] For every prime $p$ there is $s_p \in \N$ such that $L \in \mathcal L \big( \mathcal B_{\pm} (C_p^s) \big)$ for all $s \ge s_p$.
\end{itemize}
Suppose that {\bf C1} and {\bf C2} hold and let $G$ be a finite abelian group with $L \notin \mathcal L \big( \mathcal B_{\pm} (G) \big)$. Then {\bf C1} shows that $\exp (G)  < N$ and {\bf C2} implies that the $p$-rank of $G$ is bounded above by $s_p$ for all primes $p$ dividing $\exp (G)$. Thus, the assertion follows.

\smallskip
\noindent
{\it Proof of {\bf C1.}} By Theorem \ref{1.1}, there is some $S = m_1 \cdot \ldots \cdot m_{\ell} \in \mathcal B ( \Z )$ such that $L = \mathsf L_{\mathcal B_{\pm} ( \Z )} (S)$. We set $N = 1+\sum_{i=1}^{\ell} |m_i|$. Let  $n \in \N$ with $n \ge N$ and let $\varphi_n \colon \Z \to \Z/n\Z$ denote the canonical epimorphism. If $A \in \mathcal F (\Z)$ with $A \t S$ in $\mathcal F ( \Z)$, then $A \in \mathcal B_{\pm} ( \Z)$ if and only if $\varphi_n (A) \in \mathcal B_{\pm} ( \Z/n\Z)$. This implies that $\mathsf L_{\mathcal B_{\pm} ( \Z )}(S) = \mathsf L_{\mathcal B_{\pm} ( \Z/n\Z )}(\varphi_n (S))$.

\smallskip
\noindent
{\it Proof of {\bf C2.}} Let $p$ be a prime and let $G$ be an elementary $p$-group of infinite rank. By Theorem \ref{1.1},  there is some $S \in \mathcal B(G)$ such that $\mathsf L_{\mathcal B_{\pm} (G)} (S) = L$. Clearly, $G_1= \langle \supp (S) \rangle \subset G$ is an elementary $p$-group of finite rank, $S \in \mathcal B (G_1)$, and  $L = \mathsf L_{\mathcal B_{\pm} (G_1)} (S) $. Thus, {\bf C2} holds with $s_p$ being the $p$-rank of $G_1$.
\end{proof}

\providecommand{\bysame}{\leavevmode\hbox to3em{\hrulefill}\thinspace}
\providecommand{\MR}{\relax\ifhmode\unskip\space\fi MR }
% \MRhref is called by the amsart/book/proc definition of \MR.
\providecommand{\MRhref}[2]{%
  \href{http://www.ams.org/mathscinet-getitem?mr=#1}{#2}
}
\providecommand{\href}[2]{#2}

\end{document}